\documentclass[11pt]{article}

\usepackage[margin=1in]{geometry}
\usepackage{amsmath,amssymb,amsthm,mathtools,bm}
\usepackage{enumitem}
\usepackage{hyperref}
\usepackage[T1]{fontenc}
\usepackage{lmodern}
\usepackage{microtype}

\hypersetup{
  colorlinks=true,
  linkcolor=blue,
  citecolor=blue,
  urlcolor=blue
}

\newcommand\blfootnote[1]{
  \begingroup
  \renewcommand\thefootnote{}\footnote{#1}
  \addtocounter{footnote}{-1}
  \endgroup
}

\newtheorem{theorem}{Theorem}[section]
\newtheorem{proposition}[theorem]{Proposition}
\newtheorem{lemma}[theorem]{Lemma}
\newtheorem{corollary}[theorem]{Corollary}
\theoremstyle{definition}

\newtheorem{remark}[theorem]{Remark}
\newtheorem{example}[theorem]{Example}

\numberwithin{equation}{section}

\def \kCay{\operatorname{k-Cay}}
\def \Z{\mathbb{Z}}
\def \C{\mathbb{C}}
\def \R{\mathbb{R}}
\def \y{{\bm y}}

\title{Fourier Decomposition and E-Eigenvalues of Hypergraph Polynomials over Finite Abelian Groups}
\author{\normalsize{Yan-Hong Bao, Yi-Zheng Fan, Lili Liao*}\\
\footnotesize{School of Mathematical Sciences, Anhui University, Hefei 230601, China }}
\date{}
\begin{document}
\maketitle
\blfootnote{*Corresponding author. Email: baoyh@ahu.edu.cn, fanyz@ahu.edu.cn, lililiao@stu.ahu.edu.cn.}

\begin{abstract}
Let $H=(V,E)$ be a $k$-uniform hypergraph with polynomial
$f_H(\bm{x})=\sum_{e\in E}\prod_{v\in e}x_v$.
For a finite abelian group $G$ and a subgroup $S$ of $G$, the
$k$-partite Cayley hypergraph $K$ has $k$ disjoint vertex
parts, each indexed by $G$, and $\{(1,g_1),\ldots,(k,g_k)\}$ is an
edge if and only if $g_1+\cdots+g_k\in S$.
For $k=2$, $K$ is isomorphic to the Bi-Cayley graph associated with
$G$ and $S$.
For $k\ge3$, we use character theory to express $f_K$, under a complex
unitary change of variables, as a sum of exactly $[G:S]$ nonzero
monomials in pairwise disjoint sets of variables. This yields a direct
sum decomposition whenever $S$ is a proper subgroup of $G$.
When $G/S$ is an elementary abelian $2$-group, including the case
$G=\Z_2^m$, this decomposition can be realized by a real orthogonal
change of variables, enabling us to determine the complete sets of complex E-eigenvalues
and Z-eigenvalues of the adjacency tensor of $K$. 
\end{abstract}

\textbf{Keywords:} Finite abelian groups; Cayley hypergraphs; Fourier decomposition; E-eigenvalues

\section{Introduction}

Cayley graphs are fundamental objects in algebraic graph theory, providing a natural link between the algebraic structure of a group and the combinatorial and spectral properties of a graph \cite{Babai1979,Biggs1993,GodsilRoyle2001}. For a finite group $G$ and an inverse-closed subset $S\subseteq G\setminus\{1_G\}$, the Cayley graph $\operatorname{Cay}(G,S)$ is defined on the vertex set $G$, with two vertices $g,h\in G$ joined by an edge whenever $hg^{-1}\in S$. When $G$ is abelian and written additively, the condition $S=S^{-1}$ becomes $S=-S$, and its spectral structure admits an explicit description. The characters of $G$ diagonalize the adjacency matrix, and the corresponding eigenvalues are
\[
\lambda_\chi=\sum_{s\in S}\chi(s),\quad \chi\in\widehat G,
\]
where $\widehat G$ denotes the character group of $G$.

Bi-Cayley graphs, studied for example in~\cite{LuWangXu2006,ZouMeng2007}, extend this construction to a bipartite setting. For a finite group $G$ and a subset $S\subseteq G$, the Bi-Cayley graph $\operatorname{BiCay}(G,S)$ is the bipartite graph with vertex set $G\times \{0,1\}$ and edge set $\bigl\{\{(g,0),(sg,1)\}:g\in G,\ s\in S\bigr\}$. When $G$ is abelian, its spectrum also admits a character-theoretic description analogous to that of Cayley graphs. These results illustrate the effectiveness of group characters in converting algebraically defined adjacency relations into tractable spectral information.

Motivated by this observation, let $G$ be a finite abelian group written additively, $S$ a subgroup of $G$, and $k\geq3$. We study the $k$-partite $k$-uniform hypergraph $K$ whose edges $\{(1,g_1),\ldots,(k,g_k)\}$ are defined by
\[
g_1+\cdots+g_k\in S,
\]
where $g_i\in G$ for each $i\in[k]$ and the first coordinate specifies the vertex part. We call $K$ the \emph{$k$-partite Cayley hypergraph associated with $G$ and $S$}, and write $K=\kCay(G,S)$. For $k=2$, the same construction gives a graph isomorphic to $\operatorname{BiCay}(G,S)$. Thus, this construction extends the additive relation underlying Bi-Cayley graphs to the $k$-uniform hypergraph setting while retaining the group structure required for Fourier analysis. Note that the $k$-partite Cayley hypergraph defined here differs from the Cayley hypergraphs considered in \cite{Buratti1994}.

To study its algebraic structure, we consider the associated hypergraph polynomial
\[
f_K(\bm{x})
=
\sum_{g_1+\cdots+g_k\in S}
x_{1,g_1}x_{2,g_2}\cdots x_{k,g_k}.
\]
To analyze this hypergraph polynomial, we apply Fourier analysis on finite abelian groups. Let $\widehat G=\operatorname{Hom}(G,\C^\times)$ be the character group of $G$, and let
\[
S^\perp
=
\{\chi\in\widehat G :\chi(s)=1\text{ for all }s\in S\}
\]
be the annihilator subgroup of $S$. By expanding the subgroup indicator
$\mathbf 1_S$ in characters, we obtain a Fourier decomposition of $f_K(\bm{x})$.
Under the normalized Fourier transformation, $f_K(\bm{x})$ becomes the sum
of $|S^\perp|$ monomials, with a common nonzero coefficient, in pairwise disjoint sets of variables. When
$|S^\perp|\geq2$, this gives a direct sum decomposition. Since
$|S^\perp|=[G:S]$, the subgroup index determines the number of nonzero summands, thereby relating the subgroup structure of $G$ to the direct sum structure of the
associated hypergraph polynomial. For a general finite
abelian group, the decomposition is given by a complex unitary Fourier
transformation. If $G/S$ is an elementary abelian $2$-group, the characters
in $S^\perp$ are real-valued, and their normalized coordinate vectors can
be completed to a real orthonormal basis. This yields the same decomposition
under a real orthogonal change of variables. When $G=\Z_2^m$, one may
use the full real Walsh--Hadamard orthogonal transformation.

This orthogonal case is particularly relevant to tensor spectral theory. Tensor eigenvalue theory, initiated by Qi and Lim \cite{Qi2005,Lim2005} and developed for hypergraphs by Cooper--Dutle, Pearson--Zhang, and others \cite{CooperDutle2012,PearsonZhang2014}, provides a spectral framework for uniform hypergraphs. E-eigenvalues and Z-eigenvalues have been studied from
several perspectives and applied in various contexts. For example, related studies include the E-characteristic polynomial of
even-order tensors \cite{NiQiWangWang2007}, Z-eigenvalues and their
applications to spectral hypergraph theory \cite{LiQiYu2013},
Z-eigenvalue localization with applications to multipartite quantum
entanglement \cite{XiongJiangLiuQin2022}, and the best rank-one
approximation of symmetric tensors \cite{GuanChuChu2018}.
However, it is generally difficult to determine all E-eigenvalues of a hypergraph explicitly. When $G/S$ is an elementary abelian $2$-group, the real orthogonal change of variables described above expresses the hypergraph
polynomial as a sum of monomials in disjoint sets of variables, giving a direct sum
whenever $|S^\perp|\ge2$. This enables us to compute the E-eigenvalues explicitly. In
particular, we determine the complete set of complex E-eigenvalues in terms of sums of $(k-2)$th roots of unity and derive an explicit description of the Z-eigenvalues. Consequently, for fixed $G$ and $k$, among subgroups $S$ for which $G/S$ is an elementary abelian $2$-group, the subgroup index
determines not only the number of summands in the Fourier decomposition but
also the resulting E-eigenvalue set.

The paper is organized as follows. In Section~\ref{sec:preliminaries}, we recall some basic notions and notations, including $k$-partite Cayley hypergraphs, tensor eigenvalues, and polynomial decompositions. Section~\ref{sec:fourier} develops the Fourier decomposition over
finite abelian groups and determines the transformed adjacency tensor.
Section~\ref{sec:eigenvalues} treats the case where $G/S$ is an elementary
abelian $2$-group and derives the complex E-eigenvalues and Z-eigenvalues
of the adjacency tensor, with $G=\Z_2^m$ as a special case.

\section{Preliminaries}\label{sec:preliminaries}

\subsection{\texorpdfstring{The $k$-partite Cayley hypergraph and its polynomial}{The k-partite Cayley hypergraph and its polynomial}}

A \emph{hypergraph} $H=(V(H),E(H))$ consists of a vertex set $V(H)$ and an
edge set $E(H)$ whose elements are subsets of $V(H)$. The \emph{degree} of a
vertex $v\in V(H)$, denoted by $d_v$, is the number of edges of $H$
that contain $v$. If every edge $e\in E(H)$ satisfies $|e|=k$, then $H$
is called a \emph{$k$-uniform hypergraph}.
A $k$-uniform hypergraph is called \emph{$k$-partite} if its vertex set has a partition
\[
  V=V_1 \sqcup V_2\sqcup\cdots\sqcup V_k
\]
such that every edge contains exactly one vertex from each part.
A hypergraph is called \emph{linear} if every two distinct edges have at most one vertex in common.

Let $H$ be a $k$-uniform hypergraph on vertex set
$[n]:=\{1,\ldots,n\}$, with edge set $E(H)$. For an edge
$e=\{i_1,\ldots,i_k\} \in E(H)$, we denote by $x^e$
 the monomial $x_{i_1}x_{i_2}\cdots x_{i_k}$.
The \emph{hypergraph polynomial}
$f_H(\bm{x})$ of $H$ is defined by
\begin{align}\label{2.1-eq-hypergraph ploy}
f_H(\bm{x})=\sum_{{e}\in E(H)}x^e.
\end{align}

Let $G$ be a finite abelian group written additively, $S$ a subgroup of $G$, and $k\geq 2$.
We define a hypergraph $K$ associated with $G$ and $S$ as follows: the vertex set is $V=V_1\sqcup V_2\sqcup \cdots \sqcup V_k$ with
\[
  V_i=\{(i,g):g\in G\}, \quad i\in [k],
\]
and a set $\{(1,g_1),(2,g_2),\ldots,(k,g_k)\}$ is an edge of $K$ if and only if
\[
  g_1+g_2+\cdots+g_k\in S.
\]
Clearly, $K$ is a $k$-partite $k$-uniform hypergraph, which is called the \emph{$k$-partite Cayley hypergraph associated with $G$ and $S$}. We write $K=\kCay(G,S)$.

We attach a variable $x_{i,g}$ to the vertex $(i,g)$. The associated hypergraph polynomial of $K$ is
\begin{equation}\label{eq:hypergraph-polynomial}
  f_K(\bm{x})=\sum_{g_1+\cdots+g_k\in S} x_{1,g_1}x_{2,g_2}\cdots x_{k,g_k}.
\end{equation}

\begin{proposition}\label{prop:basic}
Retain the above notation. Then
\begin{enumerate}[label=\textup{(\arabic*)}]
  \item $|V(K)|=k|G|$ and $|E(K)|=|S|\,|G|^{k-1}$.
\item When $k=2$, the hypergraph $K$ is isomorphic to the
Bi-Cayley graph $\operatorname{BiCay}(G,S)$.
    \item If $S=\{0\}$ and $k=3$, then $K$ is linear. If $k\geq 4$ and $|G|>1$, then $K$ is not linear.
\end{enumerate}
\end{proposition}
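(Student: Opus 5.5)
Each of the three parts can be checked directly from the definition of $K=\kCay(G,S)$.

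\textbf{Part (1).} The vertex count is immediate: there are $k$ parts $V_i$, and each is a copy of $G$.

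For the edges, observe that an edge is determined by a tuple $(g_1,\ldots,g_k)\in G^k$ with $g_1+\cdots+g_k\in S$. Distinct tuples give distinct edges, because the first coordinate of each vertex records its part.

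To count the tuples, choose $g_1,\ldots,g_{k-1}$ freely, which gives $|G|^{k-1}$ choices, and then choose $s\in S$. The last entry is forced to be $g_k=s-(g_1+\cdots+g_{k-1})$. This sets up a bijection between the admissible tuples and $G^{k-1}\times S$, so $|E(K)|=|S|\,|G|^{k-1}$.

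\textbf{Part (2).} For $k=2$, an edge is a pair $\{(1,g_1),(2,g_2)\}$ with $g_1+g_2\in S$, that is, $g_2=s-g_1$ for some $s\in S$.

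Write $G$ additively, so the edges of $\operatorname{BiCay}(G,S)$ are $\{(g,0),(s+g,1)\}$. Define the map
\[
(1,g)\mapsto (g,0),\qquad (2,h)\mapsto(-h,1).
\]
This is a bijection on vertices. It sends the edge $\{(1,g),(2,s-g)\}$ to $\{(g,0),(g-s,1)\}$, and $g-s=(-s)+g$.

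Since $S$ is a subgroup, we have $-S=S$. Hence the image is exactly an edge of $\operatorname{BiCay}(G,S)$. The map is a bijection between the two edge sets, because both are parametrized by the pairs $(g,s)\in G\times S$.

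\textbf{Part (3), the case $k=3$ and $S=\{0\}$.} Suppose two distinct edges share two vertices. Those vertices lie in distinct parts, say parts $i$ and $j$. The condition $g_1+g_2+g_3=0$ then forces the remaining coordinate, so the two edges coincide. This contradiction shows $K$ is linear.

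\textbf{Part (3), the case $k\ge4$ and $|G|>1$.} Pick $a\ne0$ in $G$. Consider the two edges
\[
(g_1,\ldots,g_k)=(0,0,0,\ldots,0)\quad\text{and}\quad(0,0,a,-a,0,\ldots,0).
\]
Both have coordinate sum $0\in S$, so both are edges. They are distinct since $a\ne0$. They share the vertices $(1,0)$ and $(2,0)$, so $K$ is not linear.

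The only delicate point is in Part (2): the isomorphism must use the correct sign convention, and we need $S=-S$. This holds because $S$ is a subgroup.
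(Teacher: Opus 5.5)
Your proof is correct and follows essentially the same route as the paper. In part (2) you use the same vertex map $(2,h)\mapsto(-h,1)$ together with $-S=S$. In part (3) you argue that two coordinates force the third for linearity, and you use a $(+a,-a)$ perturbation for non-linearity; you perturb the zero edge in parts $3,4$ where the paper perturbs an arbitrary edge in parts $1,2$. Your explicit bijection $E(K)\leftrightarrow G^{k-1}\times S$ simply fills in the edge count that the paper calls immediate.
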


\begin{proof}
 Part \textup{(1)} follows immediately from the definition of $k$-partite Cayley hypergraph.

(2) Recall that the Bi-Cayley graph $\operatorname{BiCay}(G,S)$ has the vertex set $G\times\{0,1\}$ and the edge set
\[
\bigl\{\{(g,0),(s+g,1)\}:g\in G,\ s\in S\bigr\}.
\]
For $k=2$, the edge condition for the $k$-partite Cayley hypergraph is $g_1+g_2\in S$. We consider a mapping
$\sigma\colon V(K)\to V(\operatorname{BiCay}(G,S))$ given by
\[\sigma((1,g_1))=(g_1,0) \quad {\rm and}\quad \sigma((2,g_2))=(-g_2,1).\]

Clearly, $\sigma$ is a bijection.
Since $S$ is a subgroup of $G$, $g_1+g_2\in S$ if and only if there exists $s\in S$ such that $-g_2=g_1+s$.
Hence, $\{(1, g_1), (2, g_2)\}$ is an edge in $K$ if and only if $\{\sigma((1, g_1)), \sigma((2, g_2))\}$ is an edge in $\operatorname{BiCay}(G,S)$. Therefore, the mapping $\sigma$ is an isomorphism from $K$ to $\operatorname{BiCay}(G,S)$.

(3) If $S=\{0\}$ and $k=3$, then $e=\{(1, g_1), (2, g_2), (3, g_3)\}$ is an edge in $K$ if and only if
\[g_1+g_2+g_3=0.\]
Suppose that $e'=\{(1, g'_1), (2, g'_2), (3, g'_3)\}$ is also an edge. Clearly, if $|e\cap e'|\ge 2$, then $e=e'$, and therefore $K$ is linear.

If $k\geq4$ and $|G|>1$, we assume that $e=\{(1, g_1), \cdots, (k, g_k)\}$ is an edge in $K$.
Taking an arbitrary nonzero element $h\in G$, we write $g'_1=g_1+h$, $g'_2=g_2-h$ and $g'_i=g_i$ for all $3\le i\le k$. Clearly $g'_1+\cdots+g'_k=g_1+\cdots+g_k\in S$.  It follows that $e'=\{(1, g'_1), \cdots, (k, g'_k)\}\neq e$ is also an edge in $K$ and $|e\cap e'|=k-2\ge 2$. Hence $K$ is not linear.
\end{proof}

\subsection{E-eigenvalues of hypergraphs}
We recall some basic notions for tensors, following Qi \cite{Qi2005}.
Let $\Bbbk$ be either $\R$ or $\C$.
An \emph{order-$m$, dimension-$n$ tensor} $\mathcal{A}=(a_{i_1 i_2 \cdots i_m})$ consists of
$n^m$ entries $a_{i_1 i_2\cdots i_m}\in\Bbbk$,
where $i_j \in [n]$ for each $j\in [m]$. The tensor $\mathcal A$ is
said to be \emph{supersymmetric} if its entries are invariant under any permutation
of their indices \cite{KofidisRegalia2002}.

For an order-$m$, dimension-$n$ tensor $\mathcal{A}=(a_{i_1 i_2 \cdots i_m})$, we introduce a homogeneous polynomial of degree $m$
\[
f(\bm{x}):=\mathcal A\bm{x}^m
=
\sum_{i_1,i_2, \ldots,i_m=1}^n a_{i_1,i_2,\cdots,i_m}x_{i_1}\cdots x_{i_m},
\]
where $\bm{x}=(x_1,\ldots,x_n)^\top$.

A complex number $\lambda$ is called an \emph{E-eigenvalue} of $\mathcal A$ if
there exists a nonzero vector $\bm{x}\in\C^n$ such that
\begin{equation}\label{eq:E-eigen}
    \mathcal A\bm{x}^{m-1}=\lambda\bm{x},
\quad
\bm{x}^\top \bm{x}=1,
\end{equation}
where $\mathcal{A}\bm{x}^{m-1}$ is an $n$-dimensional vector given by
\[(\mathcal{A}\bm{x}^{m-1})_i=\sum_{1\le i_2, \cdots,i_m\le n}a_{ii_2\cdots i_m}x_{i_2}\cdots x_{i_m},\]
and $\bm{x}^\top\bm{x}=\sum_i x_i^2$. The vector $\bm{x}$ is called an
\emph{E-eigenvector} associated with $\lambda$. If both $\bm{x}$ and $\lambda$ are real, the eigenpair $(\lambda, \bm{x})$ is
called a real \emph{E-eigenpair}, also called a \emph{Z-eigenpair} in the literature \cite{Lim2005,Qi2005}. We denote by $\operatorname{Spec}_E(\mathcal A)$ the set of all complex
\emph{E-eigenvalues} of $\mathcal A$, and by $\operatorname{Spec}_Z(\mathcal A)$
the subset of \emph{Z-eigenvalues} of $\mathcal A$, namely those
real E-eigenvalues that admit real normalized eigenvectors. Throughout this paper, spectra are sets of eigenvalues; multiplicities are not included.

For an $n\times n$ complex matrix $P=(p_{ij})$, we consider the tensor $\mathcal B=P^m\mathcal A$ given by
\begin{equation}
b_{i_1i_2\cdots i_m}=\sum_{j_1,j_2,\dots,j_m=1}^n p_{i_1j_1}p_{i_2j_2}\cdots p_{i_mj_m}a_{j_1j_2\cdots j_m}.
\end{equation}
If $P$ is real and orthogonal, then $\mathcal A$ and $\mathcal B$ are said to be \emph{orthogonally similar}.
The following result plays a key role in our paper.

\begin{theorem}[\cite{Qi2005}, Theorem 7]
\label{thm:qi-orthogonal-invariance}
If supersymmetric tensors $\mathcal A$ and $\mathcal B$ are orthogonally similar, then they have the same E-eigenvalues. In particular, if $\mathcal B = P^m\mathcal A$, $\lambda$ is an E-eigenvalue of $\mathcal A$ and $\bm{x}$ is an E-eigenvector of $\mathcal A$ associated with $\lambda$, where $P$ is an $n\times n$ real orthogonal matrix, then $\lambda$ is also an E-eigenvalue of $\mathcal B$ and $\bm{y}=P\bm{x}$ is an E-eigenvector of $\mathcal B$ associated with $\lambda$.
\end{theorem}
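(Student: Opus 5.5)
Theorem~\ref{thm:qi-orthogonal-invariance} is a change-of-variables statement, so the plan is to verify the eigen-equation directly. First I will rewrite the transformation rule in matrix terms. For a supersymmetric $\mathcal A$ of order $m$, the vector $\mathcal A\bm{x}^{m-1}$ is $\frac1m\nabla(\mathcal A\bm{x}^m)$. The defining formula $b_{i_1\cdots i_m}=\sum p_{i_1j_1}\cdots p_{i_mj_m}a_{j_1\cdots j_m}$ shows that $\mathcal B$ is again supersymmetric, since permuting the indices of $\mathcal B$ permutes the $j$'s, and $\mathcal A$ is invariant under that. Substituting $\bm{y}$ gives
\[
\mathcal B\bm{y}^{m-1}=P\,\mathcal A\bigl(P^\top\bm{y}\bigr)^{m-1}.
\]
The $i$th component of $\mathcal B\bm{y}^{m-1}$ equals $\sum_{j_1}p_{ij_1}\sum_{j_2,\ldots,j_m}a_{j_1\cdots j_m}\prod_{t\ge2}(P^\top\bm{y})_{j_t}$. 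This identity is pure index bookkeeping: one regroups the sums over $i_2,\dots,i_m$ into the entries $(P^\top\bm{y})_{j_t}=\sum_{i_t}p_{i_tj_t}y_{i_t}$.

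Next, given an E-eigenpair $(\lambda,\bm{x})$ of $\mathcal A$, I set $\bm{y}=P\bm{x}$. Orthogonality $P^\top P=I$ gives $P^\top\bm{y}=\bm{x}$. The displayed identity then yields
\[
\mathcal B\bm{y}^{m-1}=P\mathcal A\bm{x}^{m-1}=\lambda P\bm{x}=\lambda\bm{y}.
\]
For the normalization, note that $P$ is real, so $\bm{y}^\top\bm{y}=\bm{x}^\top P^\top P\bm{x}=\bm{x}^\top\bm{x}=1$. Here the complex vector is paired with the bilinear form $\bm{x}^\top\bm{x}$, not a Hermitian form, and this is exactly where realness of $P$ matters: a complex unitary $P$ would preserve $\bm{x}^*\bm{x}$ but not $\bm{x}^\top\bm{x}$. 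Hence $\lambda$ is an E-eigenvalue of $\mathcal B$ with eigenvector $P\bm{x}$.

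For equality of the spectra, I will apply the same argument to $\mathcal A=(P^\top)^m\mathcal B$. This holds because $P^\top$ is also real orthogonal and $(P^\top)^m(P^m\mathcal A)=(P^\top P)^m\mathcal A=\mathcal A$ by multiplicativity of the action. Real eigenpairs map to real eigenpairs, so the Z-spectra agree as well. The only step needing care is the index computation behind the displayed identity, together with checking that the action composes multiplicatively. Both are routine multilinear bookkeeping, so I expect no genuine obstacle.
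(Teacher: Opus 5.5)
Your proof is correct. The index identity $\mathcal B\bm{y}^{m-1}=P\,\mathcal A(P^\top\bm{y})^{m-1}$, together with $P^\top P=I$ (used both to get $P^\top\bm{y}=\bm{x}$ and to preserve the bilinear normalization) and the inverse action $(P^\top)^m$ for the reverse inclusion, is the standard direct verification; the paper gives no proof of its own here and simply quotes Qi's Theorem~7. One small precision: the normalization step really uses $P^\top P=I$ rather than realness as such, so a complex orthogonal $P$ would work equally well for E-eigenvalues (unlike a unitary one), while realness is what sends real eigenvectors to real ones for the Z-spectrum.
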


Let $H$ be a $k$-uniform hypergraph on vertex set
$[n]$.
Cooper and Dutle \cite{CooperDutle2012} introduced the \emph{adjacency tensor}
$\mathcal A_H=(a_{i_1\cdots i_k})$ of $H$, which is the supersymmetric tensor of order
$k$ and dimension $n$ given by
\begin{equation}
a_{i_1,i_2,\cdots i_k}=
\begin{cases}
\dfrac{1}{(k-1)!}, & \{i_1,\ldots,i_k\}\in E(H),\\[6pt]
0, & \text{otherwise}.
\end{cases}\label{eq:adjacency-tensor}
\end{equation}
The E-eigenvalues (respectively, E-eigenvectors, E-eigenpairs) of $H$ refer to those of its
adjacency tensor.

Clearly, we have
\begin{equation}\label{eq:tensor-gradient}
  \mathcal A_H\bm{x}^{k-1}=\nabla f_H(\bm{x})
  \quad {\rm and} \quad
  \mathcal A_H\bm{x}^k=k f_H(\bm{x}),
\end{equation}
where $f_H(\bm{x})$ is the hypergraph polynomial of $H$ introduced in \eqref{2.1-eq-hypergraph ploy}, and $\nabla f_H(\bm{x})$ denotes the gradient of $f_H$, whose $i$-th
component is $\partial f_H/\partial x_i$. Consequently, a complex number $\lambda$ is an E-eigenvalue of
$H$ if and only if there exists a nonzero vector
$\bm{x}\in\C^n$ satisfying
\begin{equation}\label{eq:eigenvalue-lambda}
\nabla f_H(\bm{x})=\lambda\bm{x},\quad
\bm{x}^{\top}\bm{x}=1.
\end{equation}

\subsection{Direct sum decompositions of homogeneous polynomials}

Recall that a \emph{higher degree form} means a homogeneous polynomial of degree $d>2$.
In this subsection, let $d\ge 3$ be an integer, and let $\Bbbk$ be a field
with $\operatorname{char}\Bbbk=0$ or
$\operatorname{char}\Bbbk>d$. A form of degree $d$ in $n$ variables
is an element of the polynomial ring $\Bbbk[x_1,\ldots,x_n]$ which is
a sum of monomials of degree $d$. A form $f$ is called a \emph{direct sum} if,
after an invertible linear change of variables, it can be written as a sum
of $t\ge 2$ forms in disjoint sets of variables as follows:
\[
f = f_1(y_1,\ldots,y_{a_1}) + \cdots
+ f_t(y_{a_{t-1}+1},\ldots,y_n).
\]
If $f$ is not a direct sum, we say that $f$ is \emph{indecomposable}. For a detailed treatment of direct sum decompositions of higher degree
forms, we refer to \cite{HuangLvZhangYe2018}.

The center theory of higher degree forms provides a criterion for direct sum decompositions and diagonalization of higher degree forms \cite{Harrison1975,HuangLuYeZhang2021}. Observe that the decomposition
of higher degree forms can also be extended to a collection of multivariate polynomials and even to quadratic forms. For a collection of multivariate polynomials, their centers form a special Jordan algebra, which yields a criterion and an algorithm for simultaneous direct sum decompositions \cite{FangHuangLiao2025}. In the quadratic case, center theory can be used to handle the simultaneous block diagonalization of a set of symmetric matrices via congruence, and can be further extended to Hermitian matrices via \(*\)-congruence \cite{FangHuangHuang2026}. These results provide an algebraic framework for studying direct sum decompositions through the center theory.

\section{Fourier decomposition over finite abelian groups}\label{sec:fourier}

\subsection{Character theory of finite abelian groups}
We first recall some basic facts from the character theory of finite abelian groups, which will be used in the Fourier decomposition below. For an accessible introduction to the representation theory of finite groups and its character-theoretic aspects, we refer the reader to \cite{Steinberg2012}; for Fourier analysis on finite groups, see \cite{Luong2009,Terras1999}.

Let \(G\) be a finite abelian group, written additively. A \emph{character} of \(G\) is a group homomorphism from \(G\) to the multiplicative group \(\C^\times\) of nonzero complex numbers. The set of all such homomorphisms forms a group under pointwise multiplication, called the \emph{character group} of \(G\), denoted by $
\widehat G = \operatorname{Hom}(G, \C^\times).$
It is a standard fact that \(\widehat G \cong G\) for any finite abelian group \(G\); in particular, \(|\widehat G| = |G|\). Moreover, since $G$ is abelian, every irreducible complex representation of $G$ is one-dimensional. Thus, the characters of $G$, which are the group homomorphisms from $G$ to $\C^\times$, are precisely all the irreducible characters of $G$ in the sense of representation theory.

Let $\C^G$ denote the vector space of all complex-valued functions on $G$. For $\phi, \psi\in \C^G$, we consider the inner product
\[
\langle \phi, \psi \rangle = \frac{1}{|G|} \sum_{g \in G} \phi(g) \overline{\psi(g)}.
\]
Observe that the irreducible characters of \(G\) satisfy the orthogonality relation, that is, for
$\chi, \chi'\in \widehat{G}$,
\begin{equation}\label{eq:character-orthogonality}
\langle \chi, \chi' \rangle=\frac{1}{|G|} \sum_{g \in G} \chi(g) \overline{\chi'(g)} =
\begin{cases}
1, & \chi = \chi', \\
0, & \chi \neq \chi',
\end{cases}
\end{equation}
see \cite[Chapter~4]{Steinberg2012}. These are the classical Schur orthogonality relations restricted to the abelian setting. They imply that the irreducible characters of \(G\) form an orthonormal basis of \(\C^G\) with respect to the above inner product.
Consequently, every function \(\Phi: G \to \C\) admits a unique expansion in terms of irreducible characters. Indeed, for a function \(\Phi: G \to \C\), its Fourier transform $\widehat{\Phi}\colon \widehat{G}\to \C$ is defined by
\begin{equation}\label{eq:fourier-transform}
\widehat \Phi(\chi) \colon= \langle \Phi, \chi \rangle =\frac{1}{|G|} \sum_{g \in G} \Phi(g) \overline{\chi(g)}, \quad \chi \in \widehat G.
\end{equation}
Thus, the Fourier inversion formula asserts that for every \(g \in G\),
\begin{equation}\label{eq:fourier-inversion}
\Phi(g) = \sum_{\chi \in \widehat G} \widehat \Phi(\chi) \chi(g).
\end{equation}

Now, let \(S \leq G\) be a subgroup. Its \emph{annihilator} in the character group is defined as
\[
S^\perp = \{\chi \in \widehat G \mid \chi(s) = 1 \text{ for every } s \in S\}.
\]
A fundamental duality theorem for finite abelian groups gives $
S^\perp \cong \widehat{G/S},$
and, therefore,
\begin{equation}\label{eq:annihilator-size}
|S^\perp| = |G/S| = [G : S].
\end{equation}
Thus, the size of the annihilator is precisely the index of \(S\) in \(G\), a fact that will be essential in determining the number of nonzero summands in the Fourier decomposition below.

\begin{lemma}[Fourier expansion of subgroup indicators]\label{lem:subgroup-indicator}
Let $G$ be a finite abelian group and $S$ a subgroup of $G$. Suppose that $\mathbf{1}_S$ is the indicator function of $S$ given by $\mathbf{1}_S(g)=\begin{cases} 1, & g\in S, \\0, & g \in G\setminus S\end{cases}$. Then for every $g\in G$,
\begin{equation}\label{eq:subgroup-indicator}
\mathbf{1}_S(g)=\frac{|S|}{|G|}\sum_{\chi\in S^\perp}\chi(g).
\end{equation}
\end{lemma}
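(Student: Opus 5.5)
The plan is to compute the Fourier coefficients of $\mathbf 1_S$ directly from \eqref{eq:fourier-transform} and then apply the inversion formula \eqref{eq:fourier-inversion}. For $\chi\in\widehat G$ we have
\[
\widehat{\mathbf 1_S}(\chi)=\frac{1}{|G|}\sum_{s\in S}\overline{\chi(s)}.
\]
The restriction $\chi|_S$ is a character of $S$. Applying the orthogonality relation \eqref{eq:character-orthogonality} on the group $S$, with $\chi|_S$ against the trivial character of $S$, shows that $\sum_{s\in S}\chi(s)$ equals $|S|$ if $\chi|_S$ is trivial and $0$ otherwise.

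To keep the argument self-contained, I will also give the standard direct argument. If some $t\in S$ has $\chi(t)\neq1$, then translation by $t$ permutes $S$. This gives $\chi(t)\sum_{s\in S}\chi(s)=\sum_{s\in S}\chi(s)$, so the sum vanishes.

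Since $\chi|_S$ is trivial exactly when $\chi\in S^\perp$, taking complex conjugates gives
\[
\widehat{\mathbf 1_S}(\chi)=\frac{|S|}{|G|}\quad\text{if }\chi\in S^\perp,
\]
and $\widehat{\mathbf 1_S}(\chi)=0$ otherwise. Substituting into \eqref{eq:fourier-inversion} yields \eqref{eq:subgroup-indicator} immediately.

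There is no real obstacle here; the only point needing care is the vanishing of the character sum over $S$ for characters outside $S^\perp$, which the translation argument handles. As a sanity check, I will evaluate at $g=0$. The right-hand side becomes $\frac{|S|}{|G|}|S^\perp|=1$ by \eqref{eq:annihilator-size}, which is consistent.
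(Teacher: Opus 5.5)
Your proposal is correct and follows essentially the same route as the paper. The paper also computes $\widehat{\mathbf 1_S}(\chi)$, uses the translation argument to show that the character sum over $S$ vanishes for $\chi\notin S^\perp$, and then applies the Fourier inversion formula. Your extra remark that this vanishing is orthogonality on $S$, and your consistency check at $g=0$ via $|S^\perp|=[G:S]$, are both valid but not needed.
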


\begin{proof}
For any $\chi\in\widehat G$, by the definition of the Fourier transform \eqref{eq:fourier-transform}, we have
\[
\widehat{\mathbf{1}_S}(\chi)
=\frac{1}{|G|}\sum_{g\in G}\mathbf{1}_S(g)\overline{\chi(g)}
=\frac{1}{|G|}\sum_{s\in S}\overline{\chi(s)}.
\]
If $\chi\in S^\perp$, then $\chi(s)=1$ for every $s\in S$, and therefore
\[
\widehat{\mathbf{1}_S}(\chi)=\frac{|S|}{|G|}.
\]
If $\chi\notin S^\perp$, then $\chi|_S$ is a nontrivial character of $S$. Hence, there exists $s_0\in S$ such that $\chi(s_0)\neq1$. Denote
$
A=\sum_{s\in S}\overline{\chi(s)}
$.
Since $S$ is a subgroup, the translation $s\mapsto s+s_0$ is a bijection on $S$. Therefore
\[
A=\sum_{s\in S}\overline{\chi(s+s_0)}
=\overline{\chi(s_0)}A.
\]
By $\overline{\chi(s_0)}\neq1$, we have $A=0$ and $\widehat{\mathbf{1}_S}(\chi)=0$. Thus, we get
\[
\widehat{\mathbf{1}_S}(\chi)=
\begin{cases}
|S|/|G|, & \chi\in S^\perp,\\
0, & \chi\notin S^\perp.
\end{cases}
\]
Therefore, by the Fourier inversion formula \eqref{eq:fourier-inversion}, we have
\[
\mathbf{1}_S(g)=\sum_{\chi\in\widehat G}\widehat{\mathbf{1}_S}(\chi)\chi(g)=\frac{|S|}{|G|}\sum_{\chi\in S^\perp}\chi(g)
\]
for every $g\in G$.
\end{proof}

Lemma \ref{lem:subgroup-indicator} plays a central role in the proof of the Fourier decomposition theorem in the next subsection.

\subsection{Fourier decomposition of the hypergraph polynomial}

Replacing $g$ by $g_1 + \cdots + g_k$ in \eqref{eq:subgroup-indicator}, we obtain the identity
\begin{equation}\label{eq:additive-indicator}
\mathbf 1_{S}(g_1 + \cdots + g_k)
= \frac{|S|}{|G|} \sum_{\chi \in S^\perp} \chi(g_1) \chi(g_2) \cdots \chi(g_k).
\end{equation}

For $i\in [k]$ and $\chi \in \widehat G$, one can define the linear form
\begin{equation}\label{eq:linear-form}
L_{i,\chi} = \sum_{g \in G} \chi(g) x_{i,g}.
\end{equation}

\begin{theorem}[Fourier decomposition]\label{thm:fourier-decomposition}
Let $G$ be a finite abelian group, $S$ a subgroup of $G$, and $k\ge3$. Set $K=\kCay(G,S)$. Then the hypergraph polynomial $f_K(\bm{x})$ has the Fourier decomposition
\begin{equation}\label{eq:fourier-decomposition}
f_{K}(\bm{x}) = \frac{|S|}{|G|} \sum_{\chi \in S^\perp} L_{1,\chi} L_{2,\chi} \cdots L_{k,\chi}.
\end{equation}
\end{theorem}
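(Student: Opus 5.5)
The plan is to write the edge condition as an indicator and substitute the character expansion from Lemma~\ref{lem:subgroup-indicator}. First I rewrite \eqref{eq:hypergraph-polynomial} as a sum over all $k$-tuples:
\[
f_K(\bm{x})=\sum_{g_1,\ldots,g_k\in G}\mathbf 1_S(g_1+\cdots+g_k)\,x_{1,g_1}x_{2,g_2}\cdots x_{k,g_k}.
\]
This is legitimate because each edge $\{(1,g_1),\ldots,(k,g_k)\}$ corresponds to exactly one ordered tuple. The vertex parts are disjoint and labelled, so there is no overcounting.

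Next I substitute the identity \eqref{eq:additive-indicator}. It comes from Lemma~\ref{lem:subgroup-indicator} together with the multiplicativity $\chi(g_1+\cdots+g_k)=\chi(g_1)\cdots\chi(g_k)$, which holds because $\chi$ is a homomorphism into $\C^\times$. This gives
\[
f_K(\bm{x})=\frac{|S|}{|G|}\sum_{\chi\in S^\perp}\ \sum_{g_1,\ldots,g_k\in G}\prod_{i=1}^k \chi(g_i)x_{i,g_i}.
\]

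Both sums are finite, so I may exchange them. For each fixed $\chi$, the inner sum over the product set $G^k$ of a product of factors, each depending on only one $g_i$, factors as
\[
\prod_{i=1}^k\Bigl(\sum_{g\in G}\chi(g)x_{i,g}\Bigr)=L_{1,\chi}L_{2,\chi}\cdots L_{k,\chi},
\]
using the definition \eqref{eq:linear-form}. This yields \eqref{eq:fourier-decomposition}.

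There is no real obstacle here. The only points needing care are two bookkeeping checks: the bijection between edges and ordered tuples, and the fact that the distributive expansion of $\prod_i L_{i,\chi}$ reproduces exactly the sum over $G^k$. The hypothesis $k\ge3$ is not used in the identity itself; it matters only for later interpreting the result as a direct sum of higher degree forms.
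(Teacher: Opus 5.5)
Your proposal is correct and follows essentially the same argument as the paper. You rewrite $f_K$ as a sum over $G^k$ weighted by $\mathbf 1_S(g_1+\cdots+g_k)$, substitute \eqref{eq:additive-indicator}, exchange the finite sums, and factor the inner sum into $\prod_{i=1}^k L_{i,\chi}$; your remark that $k\ge 3$ is not needed for the identity itself is also accurate.
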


\begin{proof}
Using \eqref{eq:additive-indicator}, we obtain
\begin{align*}
f_K(\bm{x})
&= \sum_{g_1, \ldots, g_k \in G}
   \mathbf 1_{S}(g_1 + \cdots + g_k)
   x_{1,g_1} x_{2,g_2} \cdots x_{k,g_k} \\
&= \frac{|S|}{|G|} \sum_{\chi \in S^\perp}
   \sum_{g_1, \ldots, g_k \in G}
   \chi(g_1)\chi(g_2) \cdots \chi(g_k) x_{1,g_1} x_{2,g_2} \cdots x_{k,g_k} \\
&= \frac{|S|}{|G|} \sum_{\chi \in S^\perp}
   \prod_{i=1}^k \left( \sum_{g \in G} \chi(g) x_{i,g} \right) \\
&= \frac{|S|}{|G|} \sum_{\chi \in S^\perp} L_{1,\chi} L_{2,\chi} \cdots L_{k,\chi}.
\end{align*}
\end{proof}

We consider the normalized Fourier coordinates by setting
\begin{equation}\label{eq:fourier-coordinates}
y_{i,\chi} = \frac{1}{\sqrt{|G|}} \sum_{g \in G} \chi(g) x_{i,g},
\quad i\in [k], \quad \chi \in \widehat G.
\end{equation}
By the orthogonality relation \eqref{eq:character-orthogonality}, the matrix \(U = (|G|^{-\frac{1}{2}} \chi(g))_{\chi\in\widehat G,\;g\in G}\) is unitary over \(\C\). Hence \eqref{eq:fourier-coordinates} gives an invertible complex-linear change of variables.
Set
\[
P=\operatorname{diag}(\underbrace{U,\ldots,U}_{k\text{ copies}}).
\]
Then $\bm y=P\bm x$ and $\bm x=P^*\bm y$, where $P^*$ denotes the conjugate transpose of $P$.

\begin{theorem}[Complex direct-sum decomposition]\label{thm:direct-sum}
Let $G$ be a finite abelian group, $S$ a subgroup of $G$, and $k\ge3$. Set $K=\kCay(G,S)$. Using the normalized Fourier coordinates \eqref{eq:fourier-coordinates}, the hypergraph polynomial $f_K(\bm{x})$ has the form
\begin{equation}\label{eq:normalized-direct-sum}
f_K(P^*\bm{y}) = c \sum_{\chi \in S^\perp} y_{1,\chi} y_{2,\chi} \cdots y_{k,\chi},
\quad
c = |S| \, |G|^{\frac{k-2}{2}}.
\end{equation}
The displayed decomposition has exactly $|S^\perp| = [G : S]$ nonzero monomial summands in pairwise disjoint sets of variables. Consequently, $f_K$ is a direct sum over $\C$ whenever $[G:S]\ge2$. The variables $y_{i,\chi}$ with $\chi \notin S^\perp$ do not occur in the transformed polynomial.
\end{theorem}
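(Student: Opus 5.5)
The plan is to substitute the normalized coordinates into the Fourier decomposition of Theorem~\ref{thm:fourier-decomposition} and simplify. By \eqref{eq:linear-form} and \eqref{eq:fourier-coordinates}, each linear form satisfies $L_{i,\chi}=\sqrt{|G|}\,y_{i,\chi}$ identically as linear forms in $\bm x$. Composing with $\bm x=P^*\bm y$ therefore turns $L_{i,\chi}$ into $\sqrt{|G|}\,y_{i,\chi}$. This uses only the fact that $P$ is unitary, so that $PP^*=I$ and the coordinate $y_{i,\chi}$ of $P(P^*\bm y)$ is exactly $y_{i,\chi}$; the unitarity of $U$ follows from \eqref{eq:character-orthogonality}, which makes the rows of $U$ orthonormal.

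Substituting into \eqref{eq:fourier-decomposition} gives
\[
f_K(P^*\bm y)=\frac{|S|}{|G|}\sum_{\chi\in S^\perp}\prod_{i=1}^k \sqrt{|G|}\,y_{i,\chi}=\frac{|S|}{|G|}\,|G|^{k/2}\sum_{\chi\in S^\perp}y_{1,\chi}\cdots y_{k,\chi}.
\]
This is \eqref{eq:normalized-direct-sum} with $c=|S|\,|G|^{(k-2)/2}$.

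Next I verify the structural claims. Distinct $\chi\in S^\perp$ give monomials in the variable sets $\{y_{1,\chi},\dots,y_{k,\chi}\}$, and these sets are pairwise disjoint because the variables are indexed by the pairs $(i,\chi)$. The coefficient $c$ is nonzero, so none of the monomials cancel. Hence there are exactly $|S^\perp|$ nonzero summands, and $|S^\perp|=[G:S]$ by \eqref{eq:annihilator-size}.

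Variables $y_{i,\chi}$ with $\chi\notin S^\perp$ clearly do not appear. Since $P^*$ is invertible, when $[G:S]\ge2$ we have written $f_K$ as a sum of at least two forms, each of degree $k\ge3$, in disjoint sets of variables. The unused variables may be assigned to any summand, or the partition of all $k|G|$ variables can be completed by adjoining them to one block, since the definition only asks for forms in disjoint variable sets covering the new coordinates. Thus $f_K$ is a direct sum over $\C$. Each summand $y_{1,\chi}\cdots y_{k,\chi}$ is nonzero, so the decomposition is genuine.

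The only delicate point is the bookkeeping of the change of variables: we must confirm that composing with $P^*$ really sends $L_{i,\chi}$ to $\sqrt{|G|}y_{i,\chi}$, rather than to some conjugate-indexed coordinate. This follows from the definition $\bm y=P\bm x$ together with unitarity, so no further obstacle is expected.
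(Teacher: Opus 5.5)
Your proposal is correct and follows the paper's proof essentially verbatim: you substitute $L_{i,\chi}=\sqrt{|G|}\,y_{i,\chi}$ into Theorem~\ref{thm:fourier-decomposition}, collect the constant $|S|\,|G|^{(k-2)/2}$, and read off the disjointness of the monomials and the count $|S^\perp|=[G:S]$ from \eqref{eq:annihilator-size}. Your extra remarks on the bookkeeping $PP^*=I$ and on absorbing the unused variables $y_{i,\chi}$, $\chi\notin S^\perp$, into one block are sound, and they are slightly more careful than the paper's write-up.
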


\begin{proof}
Since $L_{i,\chi} = \sqrt{|G|} \, y_{i,\chi}$, Theorem \ref{thm:fourier-decomposition} gives
\begin{align*}
    f_K(P^*\bm y)
&= \frac{|S|}{|G|} \sum_{\chi \in S^\perp}
  \left( \sqrt{|G|} \, y_{1,\chi} \right) \left( \sqrt{|G|} \, y_{2,\chi} \right) \cdots
  \left( \sqrt{|G|} \, y_{k,\chi} \right)\\
&= |S| \, |G|^{\frac{k-2}{2}} \sum_{\chi \in S^\perp} y_{1,\chi} y_{2,\chi} \cdots y_{k,\chi}.
\end{align*}
Observe that the monomials $y_{1,\chi} \cdots y_{k,\chi}$ and $y_{1,\chi'} \cdots y_{k,\chi'}$
have no common variables for any $\chi\neq \chi'\in \widehat{G}$.
By
\eqref{eq:annihilator-size}, we know that the number of nonzero summands is $|S^\perp|=[G:S]$. Therefore, the
displayed expression for $f_K(P^*\bm y)$ is a direct sum with $|S^\perp|=[G:S]$ nonzero monomial summands whenever $[G:S]\ge2$.
\end{proof}

Let \(G\) be a finite abelian group, $S$ a subgroup of $G$, and $k\ge 3$. We introduce a hypergraph $\widetilde{K}$ with vertex set
\[
V(\widetilde K)
=
\{(i,\chi):1\leq i\leq k,\ \chi\in \widehat G\},
\]
and edge set
\[
E(\widetilde K)
=
\{\{(1,\chi),\ldots,(k,\chi)\}:\chi\in S^\perp\}.
\]
Clearly, $\widetilde{K}$ is $k$-partite and $k$-uniform, with exactly $|S^\perp|$ pairwise disjoint edges and $k(|G|-|S^\perp|)$ isolated vertices.

\begin{theorem}[Fourier representation of adjacency tensor]\label{the-fourier-adj}
Retain the above notation. Suppose that $\mathcal{A}_K$ and $\mathcal{A}_{\widetilde{K}}$ are
the adjacency tensors of $K$ and $\widetilde{K}$, respectively.
Order the vertices of $K$ as $(1,g_1),\ldots,(1,g_{|G|}), \ldots, (k,g_1),\ldots,(k,g_{|G|})$, and the vertices of $\widetilde{K}$ as $(1,\chi_1)$, $\ldots$, $(1,\chi_{|\widehat{G}|})$, $\ldots$, $(k,\chi_1)$, $\ldots$, $(k,\chi_{|\widehat{G}|})$, where $g_1,\ldots,g_{|G|}$ are elements of $G$, and $\chi_1,\ldots,\chi_{|\widehat{G}|}$ are elements of $\widehat{G}$.
Then
\[c\mathcal{A}_{\widetilde{K}}=\overline P^{\,k}\mathcal{A}_K=P^k\mathcal{A}_K,\]
where $P=\operatorname{diag}(\underbrace{U,\ldots,U}_{k\text{ copies}})$, $ U = (|G|^{-\frac{1}{2}} \chi(g))_{\chi\in\widehat G,\;g\in G}$, and $c=|S|\,|G|^{\frac{k-2}{2}}$.
\end{theorem}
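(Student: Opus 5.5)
The plan is to verify the identity entrywise, using the definition $b_{i_1\cdots i_k}=\sum p_{i_1j_1}\cdots p_{i_kj_k}a_{j_1\cdots j_k}$ directly. This avoids having to reconcile the polynomial identity of Theorem~\ref{thm:direct-sum}, which is stated with $P^*$, with the tensor transformation $P^k\mathcal A$. The latter corresponds to the substitution $\bm x=P^\top\bm y$, not $\bm x=P^*\bm y$. Handling this convention mismatch is the one delicate point, and the explicit entry computation settles it. The same computation also shows why $P$ and $\overline P$ give the same result.

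\textbf{Reduction to distinct parts.} Index the rows of $P$ by vertices $(i,\psi)$ of $\widetilde K$ and the columns by vertices $(j,g)$ of $K$. Since $P$ is block diagonal, the entry $p_{(i,\psi),(j,g)}$ equals $|G|^{-1/2}\psi(g)$ if $i=j$ and $0$ otherwise. Fix an index $((i_1,\psi_1),\ldots,(i_k,\psi_k))$ of $P^k\mathcal A_K$. The sum defining this entry runs only over $j_r=i_r$. Moreover, $\mathcal A_K$ is nonzero only at index tuples whose underlying set is an edge of $K$, and every edge meets each part exactly once. So the entry vanishes unless $i_1,\ldots,i_k$ is a permutation of $[k]$. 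By supersymmetry of both sides it then suffices to treat $i_r=r$. In that case the entry equals
\[
\frac{|G|^{-k/2}}{(k-1)!}\sum_{g_1,\ldots,g_k\in G}\mathbf 1_S(g_1+\cdots+g_k)\,\psi_1(g_1)\cdots\psi_k(g_k).
\]

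\textbf{Evaluating the sum.} Apply \eqref{eq:additive-indicator} to replace the indicator. The sum then factors as
\[
\frac{|S|}{|G|}\sum_{\chi\in S^\perp}\prod_{r=1}^k\Bigl(\sum_{g\in G}\chi(g)\psi_r(g)\Bigr).
\]
By the orthogonality relation \eqref{eq:character-orthogonality}, together with $\overline{\chi}=\chi^{-1}$, the inner sum equals $|G|$ when $\psi_r=\overline{\chi}$ and $0$ otherwise. Hence the entry is nonzero exactly when $\psi_1=\cdots=\psi_k=\psi$ with $\overline{\psi}\in S^\perp$. Since $S^\perp$ is a subgroup, this is equivalent to $\psi\in S^\perp$. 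In that case exactly one $\chi$ contributes, and the entry equals
\[
\frac{|G|^{-k/2}}{(k-1)!}\cdot\frac{|S|}{|G|}\,|G|^{k}=\frac{c}{(k-1)!}.
\]
This is precisely $c$ times the entry of $\mathcal A_{\widetilde K}$ at $\{(1,\psi),\ldots,(k,\psi)\}$, an edge of $\widetilde K$. All other entries are $0$ on both sides, so $P^k\mathcal A_K=c\mathcal A_{\widetilde K}$.

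\textbf{The conjugate case.} For $\overline P$, repeat the computation with $\psi_r(g)$ replaced by $\overline{\psi_r(g)}$. The nonvanishing condition becomes $\psi_r=\chi\in S^\perp$ for all $r$, which yields the same support and the same value $c/(k-1)!$. Therefore $\overline P^{\,k}\mathcal A_K=c\mathcal A_{\widetilde K}$ as well, which completes the proof.
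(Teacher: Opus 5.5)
Your proof is correct, but it takes a different route from the paper.

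\textbf{The paper's route.} The paper works at the level of forms. It lets $\mathcal B$ be the tensor with $\mathcal B\bm y^k=\mathcal A_K(P^*\bm y)^k$ and identifies $\mathcal B=(P^{-\top})^k\mathcal A_K=\overline P^{\,k}\mathcal A_K$. So it resolves the $P^*$ versus $P^\top$ mismatch you flag by working first with $\overline P$, rather than by sidestepping it. It then uses Theorem~\ref{thm:direct-sum} to get $\mathcal B\bm y^k=c\,\mathcal A_{\widetilde K}\bm y^k$. Since a supersymmetric tensor is determined by its associated form, this gives $\overline P^{\,k}\mathcal A_K=c\,\mathcal A_{\widetilde K}$. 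Finally it obtains $P^k\mathcal A_K=c\,\mathcal A_{\widetilde K}$ by complex conjugation, because $\mathcal A_K$, $\mathcal A_{\widetilde K}$ and $c$ are all real.

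\textbf{Your route.} You instead re-derive the Fourier expansion entry by entry. The block structure of $P$ and the $k$-partite structure of $K$ reduce you to index tuples with $i_r=r$, and character orthogonality finishes the computation.

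\textbf{Comparison.} The paper's argument is shorter and reuses the preceding theorem. It relies on the uniqueness of the supersymmetric tensor attached to a form and on the conjugation trick. Yours is self-contained and exhibits the support of $P^k\mathcal A_K$ explicitly. It also shows why $P$ and $\overline P$ yield the same tensor: the substitution $\bm x=P^\top\bm y$ relabels $\chi$ as $\overline\chi$, and $S^\perp$ is closed under inversion.

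One step you use without stating it is that $P^k\mathcal A_K$ is itself supersymmetric, since the same matrix acts on every mode. This is what justifies the reduction to $i_r=r$.
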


\begin{proof}
By the character orthogonality relation \eqref{eq:character-orthogonality},
$P$ is unitary. Let $\mathcal B$ denote the supersymmetric tensor obtained from $\mathcal A_K$ under the
invertible linear transformation $\bm y=P\bm x$, that is, $\mathcal A_K \bm{x}^k=\mathcal B {\bm y}^k$.
Since $\bm x=P^*\bm y$, the definition of the tensor action gives
\[
\mathcal B=(P^{-\top})^k\mathcal A_K=\overline P^{\,k}\mathcal A_K,
\]
where $P^{-\top}=(P^{-1})^\top$ and $\overline P$ is the entrywise complex conjugate of $P$.

Since $\bm x=P^*\bm y$, by Theorem~\ref{thm:direct-sum}, we get a polynomial
\[
f_K(P^*\bm y)
=c\sum_{\chi\in S^\perp}y_{1,\chi}\cdots y_{k,\chi}
\]
in variables $\{y_{i,\chi}:i\in[k],\ \chi\in S^\perp\}$.
By \eqref{eq:tensor-gradient}, we know that the tensor $\mathcal B$ satisfies
\[
\mathcal B\bm y^k=\mathcal A_K \bm{x}^k=k f_K(\bm x)=k f_K(P^*\bm y).
\]

On the other hand, by the definition of $\widetilde K$, we know that the hypergraph polynomial
\[f_{\widetilde{K}}(\bm y)=\sum_{\chi\in S^\perp}y_{1,\chi}\cdots y_{k,\chi}.\]
By \eqref{eq:tensor-gradient}, we have $\mathcal{A}_{\widetilde{K}}\bm y^k=kf_{\widetilde{K}}(\bm y)$.
It follows that
\[\mathcal{B}\y^k=k f_K(P^\ast\y)=ckf_{\widetilde{K}}(\y)= c\mathcal{A}_{\widetilde{K}}\y^k.\]
Since both tensors are supersymmetric, equality of their associated polynomials implies
\[
c\,\mathcal A_{\widetilde K}=\mathcal B=\overline P^{\,k}\mathcal A_K.
\]
Taking complex conjugates and using the fact that $\mathcal A_K$, $\mathcal A_{\widetilde K}$, and $c$ are real, we also obtain
$c\,\mathcal A_{\widetilde K}=P^k\mathcal A_K$.
\end{proof}

\begin{remark}
Observe that the Fourier matrix $U$ is unitary but need not be orthogonal, since finite abelian groups may have non-real-valued irreducible characters. A unitary transformation preserves the Hermitian form $\bm x^*\bm x$, but need not preserve the bilinear form $\bm x^\top\bm x$ used to normalize E-eigenvectors. Therefore, the decomposition in Theorem \ref{thm:direct-sum} cannot in general be used directly to preserve or determine the E-eigenvalues of $K$.
\end{remark}

\begin{example}\label{ex:direct-sum}
Consider $G=\Z_3\times\Z_3$, and let $S=\{(0,0),(1,0),(2,0)\}$ be the subgroup of order $3$. Take $k=3$, and write $K=3\mbox{-}\textrm{Cay}(G,S)$. The
hypergraph polynomial of $K$ is
\begin{align*}
f_K(\bm x)=& \sum_{g_1+g_2+g_3\in S}x_{1,g_1}x_{2,g_2}x_{3,g_3}\\
=& \sum_{\substack{g_1+g_2+g_3=(0,0)}}x_{1,g_1}x_{2,g_2}x_{3,g_3}
+
\sum_{\substack{g_1+g_2+g_3=(1,0)}}x_{1,g_1}x_{2,g_2}x_{3,g_3}+
\sum_{\substack{g_1+g_2+g_3=(2,0)}}x_{1,g_1}x_{2,g_2}x_{3,g_3}.
\end{align*}

The character group of $G=\Z_3\times\Z_3$ satisfies $\widehat G\cong\Z_3\times\Z_3$. To be precise, the irreducible character $\chi_{(\alpha,\beta)}$ indexed by $(\alpha,\beta)\in\Z_3\times \Z_3$ is given by
\[
\chi_{(\alpha,\beta)}(g)=\omega^{\alpha u+\beta v}
\]
for any $g=(u, v)\in G=\Z_3\times \Z_3$, where $\omega=e^{\frac{2\pi \textrm{i}}{3}}$, and $\textrm{i}=\sqrt{-1}$.
The annihilator of $S$ is
\[
S^\perp=\{\chi_{(\alpha,\beta)}\in\widehat G:\chi_{(\alpha,\beta)}(s)=1\text{ for all }s\in S\}.
\]
It follows that $\chi_{(\alpha, \beta)}\in S^\perp$ if and only if $\alpha=0\in \Z_3$. Therefore, we get
\[
S^\perp=\{\chi_{(0,0)},\chi_{(0,1)},\chi_{(0,2)}\},
\]
and $|S^\perp|=3=[G:S]$.
By Theorem \ref{thm:fourier-decomposition}, we have
\begin{equation*}
    f_K(\bm{x})=\frac{|S|}{|G|}\sum_{\chi\in S^\perp}
L_{1,\chi}L_{2,\chi}L_{3,\chi}=\frac13\sum_{\chi\in S^\perp}
L_{1,\chi}L_{2,\chi}L_{3,\chi}
\end{equation*}
with $L_{i,\chi}=\sum_{g\in G}\chi(g)x_{i,g}$ for each $\chi\in \widehat{G}$.
Using the normalized Fourier coordinates
\[
y_{i,\chi}=\frac1{\sqrt{|G|}}\sum_{g\in G}\chi(g)x_{i,g}
=\frac13\sum_{g\in G}\chi(g)x_{i,g},
\quad i=1,2,3,
\]
we have $L_{i,\chi}=3y_{i,\chi}$. By Theorem \ref{thm:direct-sum} and $c=|S| \sqrt{|G|}=9$, the polynomial $f_K(\bm x)$ is a direct sum given by
\begin{align*}
f_K(P^\ast \bm y)=& \ c\sum_{\chi\in S^\perp}y_{1,\chi}y_{2,\chi}y_{3,\chi}\\
=& \ 9\left(
y_{1,\chi_{(0,0)}}y_{2,\chi_{(0,0)}}y_{3,\chi_{(0,0)}}
+
y_{1,\chi_{(0,1)}}y_{2,\chi_{(0,1)}}y_{3,\chi_{(0,1)}}
+
y_{1,\chi_{(0,2)}}y_{2,\chi_{(0,2)}}y_{3,\chi_{(0,2)}}
\right).
\end{align*}
\end{example}

\section{Orthogonal Fourier decomposition and E-eigenvalues}\label{sec:eigenvalues}

In this section, we determine the E-eigenvalues of the $k$-partite Cayley hypergraph associated with a finite abelian group $G$ and a subgroup $S$ such that $G/S$ is an elementary abelian $2$-group.
Recall that a finite abelian group is an \emph{elementary abelian $2$-group}
if every nonzero element has order $2$; equivalently, it is isomorphic to
$\Z_2^d$ for some $d\ge0$, including the trivial group when $d=0$.
Thus, our assumption is equivalent to
\[
2G\subseteq S,\quad 2G:=\{2g:g\in G\}.
\]
Throughout this section, let $k\ge3$, $q=|G|$, and $K=\kCay(G,S)$, and assume $2G\subseteq S$.
Observe that the character group $\widehat G$ is isomorphic to $G$.
Fix such an isomorphism and denote by $\chi_g\in\widehat G$ the character corresponding to $g\in G$.

For each $\chi_g\in S^\perp$ and $h\in G$, we have
\[
\chi_g(h)^2=\chi_g(2h)=1,
\]
since $2h\in S$. Hence the normalized character rows
\[
u_{\chi_g}=\bigl(q^{-1/2}\chi_g(h)\bigr)_{h\in G},
\quad \chi_g\in S^\perp,
\]
are real and, by \eqref{eq:character-orthogonality}, form an orthonormal set in $\R^q$.
Complete these rows to an orthonormal basis of $\R^q$, and index the added
rows by $\widehat G\setminus S^\perp$.
For $\chi_g\notin S^\perp$, the symbol $\chi_g$ is only a label for an added
row $u_{\chi_g}$; this row need not be the normalized character row.
Let
\[
R=\bigl(u_{\chi_g}(h)\bigr)_{\chi_g\in\widehat G,\ h\in G},
\]
so that $R$ is a real orthogonal matrix.
For each $i\in[k]$, write
\[
\bm x_i=(x_{i,h})_{h\in G},\quad
\bm y_i=(y_{i,\chi_g})_{\chi_g\in\widehat G},\quad
\bm y_i=R\bm x_i.
\]
In particular, the coordinates indexed by $S^\perp$ are the normalized Fourier coordinates
\begin{equation}\label{eq:real-fourier-coordinates}
y_{i,\chi_g}=\frac1{\sqrt q}\sum_{h\in G}\chi_g(h)x_{i,h},
\quad i\in[k],\quad \chi_g\in S^\perp.
\end{equation}
For all $k$ vertex parts, set
\[
\bm y=Q\bm x,\quad
Q=\operatorname{diag}(\underbrace{R,\ldots,R}_{k\text{ copies}}).
\]
Then $Q$ is real and orthogonal, and $\bm x=Q^\top\bm y$.
In general, $R$ need not equal the full Fourier matrix $U$ from
Section~\ref{sec:fourier}; only the rows indexed by $S^\perp$ are prescribed.

Set
\[
r=|S^\perp|=[G:S],\quad
c=|S|q^{\frac{k-2}{2}}.
\]
By the definition of $\widetilde{K}$, its hypergraph polynomial is
\begin{equation}\label{eq:binary-direct-sum}
  f_{\widetilde{K}}(\bm y)=\sum_{\chi_g \in S^\perp}y_{1, \chi_g}y_{2, \chi_g}\cdots y_{k, \chi_g}.
\end{equation}
Clearly, the variable $y_{i, \chi_g}$ with $\chi_g \notin S^\perp$ does not occur in $f_{\widetilde{K}}(\bm y)$.

\begin{lemma}[Orthogonal Fourier reduction]
\label{thm:orthogonal-reduction}
Let $G$ be a finite abelian group and $S$ a subgroup such that $G/S$ is an elementary abelian $2$-group.
For the real orthogonal matrix $Q$ constructed above, we have
\[
f_K(Q^\top\bm y)=c f_{\widetilde K}(\bm y),\quad
Q^k\mathcal A_K=c\mathcal A_{\widetilde K}.
\]
Consequently,
\begin{align*}
\operatorname{Spec}_E(\mathcal A_K)
&=
c\,\operatorname{Spec}_E(\mathcal A_{\widetilde K}).
\end{align*}
The same identity holds with $\operatorname{Spec}_E$ replaced by $\operatorname{Spec}_Z$.
\end{lemma}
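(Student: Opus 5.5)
We follow the proof of Theorem~\ref{the-fourier-adj}, with the unitary matrix $P$ replaced by the real orthogonal matrix $Q$. The aim is to reduce everything to the polynomial identity $f_K(Q^\top\bm y)=c f_{\widetilde K}(\bm y)$. The tensor identity then follows from supersymmetry, and the spectral statement from Theorem~\ref{thm:qi-orthogonal-invariance}.

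\emph{Step 1: the polynomial identity.} By Theorem~\ref{thm:fourier-decomposition},
\[
f_K(\bm x)=\frac{|S|}{q}\sum_{\chi\in S^\perp}L_{1,\chi}\cdots L_{k,\chi},
\qquad L_{i,\chi}=\sum_{h\in G}\chi(h)x_{i,h}.
\]
For $\chi\in S^\perp$, the row of $R$ indexed by $\chi$ is exactly $q^{-1/2}(\chi(h))_h$. Hence, under $\bm y=Q\bm x$, we have $L_{i,\chi}=\sqrt q\,y_{i,\chi}$ by \eqref{eq:real-fourier-coordinates}. This uses only the prescribed rows; the completed rows indexed by $\widehat G\setminus S^\perp$ never enter. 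Substituting $\bm x=Q^\top\bm y$ gives
\[
f_K(Q^\top\bm y)=\frac{|S|}{q}\,q^{k/2}\sum_{\chi\in S^\perp}y_{1,\chi}\cdots y_{k,\chi}=c\,f_{\widetilde K}(\bm y).
\]
The only real subtlety here is that realness is needed so that $Q$ is orthogonal. This was already verified before the lemma: $\chi(h)^2=\chi(2h)=1$ because $2G\subseteq S$.

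\emph{Step 2: the tensor identity.} Let $\mathcal B$ be the supersymmetric tensor with $\mathcal B\bm y^k=\mathcal A_K(Q^\top\bm y)^k$. Since $Q$ is real orthogonal, $(Q^{-\top})^k=Q^k$, so $\mathcal B=Q^k\mathcal A_K$. By \eqref{eq:tensor-gradient} and Step~1,
\[
\mathcal B\bm y^k=k f_K(Q^\top\bm y)=ck f_{\widetilde K}(\bm y)=c\mathcal A_{\widetilde K}\bm y^k.
\]
A supersymmetric tensor is determined by its form, because char $0$ allows polarization. Therefore $Q^k\mathcal A_K=c\mathcal A_{\widetilde K}$.

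\emph{Step 3: the spectra.} Theorem~\ref{thm:qi-orthogonal-invariance}, applied to the orthogonally similar supersymmetric tensors $\mathcal A_K$ and $Q^k\mathcal A_K$, gives $\operatorname{Spec}_E(\mathcal A_K)=\operatorname{Spec}_E(c\mathcal A_{\widetilde K})$.

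The remaining scaling step is immediate from \eqref{eq:E-eigen}. If $\mathcal A\bm x^{k-1}=\lambda\bm x$ with $\bm x^\top\bm x=1$, then $(c\mathcal A)\bm x^{k-1}=c\lambda\bm x$ with the same $\bm x$. Conversely, an eigenvalue $\mu$ of $c\mathcal A$ gives the eigenvalue $\mu/c$ of $\mathcal A$, since $c>0$. Thus $\operatorname{Spec}_E(c\mathcal A_{\widetilde K})=c\operatorname{Spec}_E(\mathcal A_{\widetilde K})$.

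For Z-eigenvalues, the eigenvector transforms as $\bm y=Q\bm x$ with $Q$ real, which preserves realness in both directions. Scaling by the real constant $c$ also preserves realness of eigenvalues and eigenvectors. So the same identity holds with $\operatorname{Spec}_Z$. I expect no serious obstacle; the main care point is to note explicitly that Qi's theorem transports both realness and the normalization $\bm x^\top\bm x=1$ because $Q$ is real orthogonal.
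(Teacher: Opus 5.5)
Your proposal is correct and follows essentially the same route as the paper. You obtain the polynomial identity from Theorem~\ref{thm:fourier-decomposition} using only the rows of $R$ indexed by $S^\perp$, deduce the tensor identity by supersymmetry, and get the spectral identity from Theorem~\ref{thm:qi-orthogonal-invariance}, with realness preserved because $Q$ is real. Your explicit checks that $Q^{-\top}=Q$ and that $\operatorname{Spec}_E(c\mathcal A)=c\operatorname{Spec}_E(\mathcal A)$ only fill in steps the paper leaves implicit.
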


\begin{proof}
For $\chi_g\in S^\perp$, equation~\eqref{eq:real-fourier-coordinates} gives
$L_{i,\chi_g}=\sqrt q\,y_{i,\chi_g}$.
By Theorem~\ref{thm:fourier-decomposition},
\[
f_K(Q^\top\bm y)
=\frac{|S|}{q}\sum_{\chi_g\in S^\perp}\prod_{i=1}^k
  \bigl(\sqrt q\,y_{i,\chi_g}\bigr)
=c f_{\widetilde K}(\bm y).
\]
By the definition of the tensor action and \eqref{eq:tensor-gradient},
\[
(Q^k\mathcal A_K)\bm y^k
=\mathcal A_K(Q^\top\bm y)^k
=k f_K(Q^\top\bm y)
=c\mathcal A_{\widetilde K}\bm y^k.
\]
Since both tensors are supersymmetric, equality of their associated
polynomials implies $Q^k\mathcal A_K=c\mathcal A_{\widetilde K}$.
As $Q$ is real and orthogonal, Theorem~\ref{thm:qi-orthogonal-invariance} gives
\[
\operatorname{Spec}_E(\mathcal A_K)
=c\operatorname{Spec}_E(\mathcal A_{\widetilde K}).
\]
Since $Q$ is real, the correspondence between eigenvectors also preserves real vectors. Thus the same argument applies to Z-eigenvalues.
\end{proof}

\begin{theorem}[Complex E-eigenvalues]\label{thm:complex-e-spectrum}
Let $G$ be a finite abelian group, $S$ a subgroup such that $G/S$ is an elementary abelian $2$-group, and $k\geq3$. Write $K=\kCay(G,S)$, $q=|G|$, $r=|S^\perp|=[G:S]$, and $c=|S|q^{(k-2)/2}$. For \(1\le s\le r\), define
\[
\Sigma_s=\left\{\sum_{i=1}^s\omega_i:\omega_i^{k-2}=1,\
\sum_{i=1}^s\omega_i\neq0\right\}.
\]
Then the complex E-eigenvalues of $\mathcal A_K$ are
\begin{align}\label{eq:complex-e-spectrum}
  \operatorname{Spec}_E(\mathcal A_{K})
  =c\operatorname{Spec}_E(\mathcal A_{\widetilde K})
  =\{0\}\cup
  \bigcup_{s=1}^r
  \left\{
    \lambda\in\C :
    \lambda^2=c^2(k\zeta)^{-(k-2)}, \zeta\in\Sigma_s
  \right\}.
\end{align}
Equivalently, since $|S|=q/r$, one may replace $c^2$ by $q^k/r^2$.
\end{theorem}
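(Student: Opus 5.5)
The plan is to reduce everything to the model tensor $\mathcal A_{\widetilde K}$ by Lemma~\ref{thm:orthogonal-reduction}. That lemma already gives $\operatorname{Spec}_E(\mathcal A_K)=c\operatorname{Spec}_E(\mathcal A_{\widetilde K})$, so it suffices to show
\[
\operatorname{Spec}_E(\mathcal A_{\widetilde K})=\{0\}\cup\bigcup_{s=1}^r\{\mu\in\C:\mu^2=(k\zeta)^{-(k-2)},\ \zeta\in\Sigma_s\}.
\]
Multiplying by $c$ then turns $\mu^2=(k\zeta)^{-(k-2)}$ into $\lambda^2=c^2(k\zeta)^{-(k-2)}$. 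The remark $c^2=|S|^2q^{k-2}=q^k/r^2$ is immediate from $|S|=q/r$. By \eqref{eq:eigenvalue-lambda} applied to $f_{\widetilde K}$ in \eqref{eq:binary-direct-sum}, an E-eigenpair $(\mu,\bm y)$ of $\widetilde K$ is a solution of the following system, where we index the edges of $\widetilde K$ by $j\in S^\perp$:
\[
\prod_{l\ne i}y_{l,j}=\mu\, y_{i,j}\quad (i\in[k],\ j\in S^\perp),\qquad 0=\mu\, y_{i,\chi}\quad(\chi\notin S^\perp),\qquad \bm y^\top\bm y=1.
\]

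First, $0$ is an eigenvalue. Take $\bm y$ to be the standard basis vector at $y_{1,j}$ for any $j\in S^\perp$. Every partial derivative of $f_{\widetilde K}$ is a product of $k-1\ge2$ variables, at least one of which vanishes, so $\nabla f_{\widetilde K}(\bm y)=0$ and $\bm y^\top\bm y=1$.

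Now let $\mu\ne0$. The isolated coordinates must vanish. Multiply the $i$-th equation of block $j$ by $y_{i,j}$ and put $P_j=\prod_l y_{l,j}$. This gives $P_j=\mu y_{i,j}^2$ for all $i$, so $y_{i,j}^2=t_j$ is independent of $i$ and $P_j=\mu t_j$. Squaring gives $t_j^k=P_j^2=\mu^2t_j^2$. Hence either $t_j=0$, in which case the whole block vanishes, or $t_j^{k-2}=\mu^2$. Let $s\ge1$ be the number of active blocks; $s\ge1$ because $\bm y\ne0$, and $s\le r$ trivially. Fix one $(k-2)$th root $\nu$ of $\mu^2$. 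Then each active block has $t_j=\nu\omega_j$ with $\omega_j^{k-2}=1$. The normalization reads $1=\sum_{i,j}y_{i,j}^2=k\nu\sum_j\omega_j$. So $\zeta:=\sum_j\omega_j\ne0$, i.e. $\zeta\in\Sigma_s$, and $\nu=(k\zeta)^{-1}$. This yields $\mu^2=\nu^{k-2}=(k\zeta)^{-(k-2)}$.

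For the converse, suppose $\zeta=\sum_{j=1}^s\omega_j\in\Sigma_s$ and $\mu^2=(k\zeta)^{-(k-2)}$. Choose $s$ blocks, set $\nu=(k\zeta)^{-1}$ and $t_j=\nu\omega_j$, and pick any square roots $y_{i,j}$ of $t_j$. Then $P_j^2=t_j^k=\nu^{k-2}t_j^2=\mu^2t_j^2$, so $P_j=\pm\mu t_j$. Flipping the sign of one coordinate in the block fixes $P_j=\mu t_j$. Then $\prod_{l\ne i}y_{l,j}=P_j/y_{i,j}=\mu t_j/y_{i,j}=\mu y_{i,j}$, and $\bm y^\top\bm y=k\nu\zeta=1$, as required.

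The only delicate point is the bookkeeping in the direct step. One must work with $y_{i,j}^2$ rather than $y_{i,j}$, since the $y_{i,j}$ are complex and only the bilinear form enters the normalization. One must also note that $t_j\ne0$ forces every $y_{i,j}\ne0$, which is what allows the division by $y_{i,j}$ and guarantees that the root-of-unity parametrization is consistent across blocks. Everything else is direct substitution.
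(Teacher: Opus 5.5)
Your proposal is correct and follows the paper's proof essentially step for step. Both arguments reduce to $\widetilde K$ via Lemma~\ref{thm:orthogonal-reduction}, use $P_j=\mu y_{i,j}^2$ to make $y_{i,j}^2$ constant on each block, parametrize $y_{i,j}^2=\nu\omega_j$ with $\nu^{k-2}=\mu^2$ so that normalization gives $\nu=(k\zeta)^{-1}$, adjust a sign for the converse, and use a standard basis vector for the eigenvalue $0$. The only cosmetic differences are two: you read off the all-zero/all-nonzero dichotomy of each block directly from $y_{i,j}^2=P_j/\mu$, where the paper argues it separately, and you flip one coordinate's sign, where the paper chooses signs $\varepsilon_{i,j}$ with a prescribed product. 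In the converse you should also state explicitly that all coordinates outside the chosen $s$ blocks are set to zero.
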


\begin{proof}
By Lemma~\ref{thm:orthogonal-reduction}, we know
$\operatorname{Spec}_E(\mathcal A_K)=c\operatorname{Spec}_E(\mathcal A_{\widetilde K})$.
Thus, it suffices to determine the E-eigenvalues of
$\mathcal A_{\widetilde K}$. By \eqref{eq:eigenvalue-lambda}, the E-eigenvalues of
$\mathcal A_{\widetilde K}$ are given by
\begin{align}\label{eq-4.2-E-eigenvalue of A tildeK}
\nabla f_{\widetilde K}(\bm y)=\lambda\bm y,
\quad
\bm y^\top\bm y=1,
\end{align}
where
\[ f_{\widetilde K}(\bm y)=\sum_{\chi_g \in S^\perp}y_{1,\chi_g}y_{2,\chi_g}\cdots y_{k,\chi_g}.\]
To be precise, equations \eqref{eq-4.2-E-eigenvalue of A tildeK} read as
\[\dfrac{\partial}{\partial y_{i,\chi_g}} f_{\widetilde{K}}(\bm y)=\lambda y_{i,\chi_g}\]
for all $i\in [k]$ and $\chi_g \in \widehat G$.
If $\chi_g \in S^\perp$, then we have
\begin{equation}\label{eq:block-equations-complex}
\prod_{\ell\neq i}y_{\ell,\chi_g}
=\lambda y_{i,\chi_g},
\quad i \in [k].
\end{equation}
If $\chi_g \notin S^\perp$, then we have
\begin{equation}\label{eq:null-equations-complex}
0=\lambda y_{i,\chi_g}
\end{equation}
since the variable $y_{i,\chi_g}$ does not appear in $f_{\widetilde K}(\bm y)$.

Assume that $\lambda\neq 0$ is an E-eigenvalue of $\mathcal A_{\widetilde{K}}$. Then
\eqref{eq:null-equations-complex} implies
$y_{i,\chi_g}=0$ for all $\chi_g \notin S^\perp$.
Next, we fix some $\chi_g \in S^\perp$ in equation \eqref{eq:block-equations-complex}. If $y_{i,\chi_g}\neq 0$ and $y_{j,\chi_g}=0$ for some $j\neq i$, then
\[
\prod_{\ell\neq i}y_{\ell,\chi_g}=0
\quad {\rm and}\quad
\lambda y_{i,\chi_g}\neq 0,
\]
which is impossible. Hence $y_{1,\chi_g},\ldots,y_{k,\chi_g}$ are either
all zero or all nonzero.

Let $B=\{\chi_g \in S^\perp :y_{1,\chi_g}\neq 0\}$, and set $s=|B|$. Since $\bm y\neq0$,
we have $1\leq s\leq r$. For $\chi_g \in B$, we define $P_{\chi_g}$ by
\[
P_{\chi_g}=\prod_{j=1}^k y_{j,\chi_g}.
\]
Multiplying \eqref{eq:block-equations-complex} by $y_{i,\chi_g}$, we obtain
$P_{\chi_g}=\lambda y_{i,\chi_g}^2$, so $y_{i,\chi_g}^2$ is independent of $i$.
Hence
\[
\lambda^2
=
\frac{P_{\chi_g}^2}{y_{1,\chi_g}^4}
=
\frac{\prod_{i=1}^k y_{i,\chi_g}^2}{y_{1,\chi_g}^4}
=
\frac{y_{1,\chi_g}^{2k}}{y_{1,\chi_g}^4}
=
y_{1,\chi_g}^{2(k-2)}.
\]
Thus, $y_{1,\chi_g}^{2(k-2)}$ is the same for all $\chi_g\in B$.

Choose $\rho\in\C\setminus\{0\}$ such that $\rho^{k-2}=\lambda^2$.
For each $\chi_g\in B$, write \(y_{i,\chi_g}^2=\rho\omega_{\chi_g}\); then
\(\omega_{\chi_g}^{k-2}=1\). The normalization condition gives
\[
1
=
\bm y^\top\bm y
=
\sum_{{\chi_g}\in B}\sum_{i=1}^k y_{i,{\chi_g}}^2
=
k\rho\sum_{{\chi_g}\in B}\omega_{\chi_g}.
\]
Thus, we have
\[
\zeta:=\sum_{{\chi_g}\in B}\omega_{\chi_g}\neq0,
\quad
\rho=\frac{1}{k\zeta}.
\]
Consequently, the E-eigenvalues of $\mathcal A_{\widetilde K}$ satisfy
\[
\lambda^2=\rho^{k-2}=(k\zeta)^{-(k-2)},\quad \zeta\in\Sigma_s.
\]

Conversely, let $B\subseteq S^\perp$ with $|B|=s$, and choose $\omega_{\chi_g}$
with $\omega_{\chi_g}^{k-2}=1$ such that
\[
\zeta=\sum_{{\chi_g}\in B}\omega_{\chi_g}\neq0.
\]
Set $\rho=(k\zeta)^{-1}$ and let $\lambda$ be either solution of $\lambda^2=\rho^{k-2}$.
For each $\chi_g\in B$, choose $t_{\chi_g}\in\C$ such that $t_{\chi_g}^2=\rho\omega_{\chi_g}$. Since
\[
t_{\chi_g}^{2(k-2)}
=
\rho^{k-2}\omega_{\chi_g}^{k-2}
=
\rho^{k-2}
=
\lambda^2,
\]
we have
\[
\left(\frac{\lambda}{t_{\chi_g}^{k-2}}\right)^2=1.
\]
We may therefore choose $\varepsilon_{i,{\chi_g}}\in\{\pm1\}$ such that
\[
\prod_{i=1}^k\varepsilon_{i,{\chi_g}}
=
\frac{\lambda}{t_{\chi_g}^{k-2}}.
\]
Define $y_{i,{\chi_g}}=\varepsilon_{i,{\chi_g}}t_{\chi_g}$ for $\chi_g \in B$ and $i\in [k]$, and set all remaining coordinates to zero. Then
\[
\bm y^\top\bm y
=
k\sum_{{\chi_g}\in B}y_{1,{\chi_g}}^2
=
k\rho\zeta
=
1.
\]
For each $\chi_g\in B$,
\[
\prod_{\ell\ne i}y_{\ell,\chi_g}
=\frac{\prod_{\ell=1}^k\varepsilon_{\ell,\chi_g}}{\varepsilon_{i,\chi_g}}
 t_{\chi_g}^{k-1}
=\lambda\varepsilon_{i,\chi_g}t_{\chi_g}
=\lambda y_{i,\chi_g}.
\]
All other equations are satisfied because their coordinates are zero.
Thus $\lambda$ is an E-eigenvalue of $\mathcal A_{\widetilde K}$, and multiplying by $c$ realizes every nonzero value on the right-hand side of \eqref{eq:complex-e-spectrum}.

It remains to show that zero is an E-eigenvalue. Take
${\chi_g}\in S^\perp$ and set
\[
y_{i,\chi_{g'}}=
\begin{cases}
1, & (i,\chi_{g'})=(1,{\chi_g}),\\
0, & (i,\chi_{g'})\neq(1,{\chi_g}).
\end{cases}
\]
Then $\bm y^\top\bm y=1$.
For $\chi_{g'}={\chi_g}$,
\[
\dfrac{\partial}{\partial y_{i,{\chi_g}}} f_{\widetilde{K}}(\bm y)
=
\prod_{j\neq i}y_{j,{\chi_g}}
=
0,
\]
since $k\ge 3$ and at most one $y_{j,{\chi_g}}$ is nonzero. For
$\chi_{g'} \neq {\chi_g}$, $\dfrac{\partial}{\partial y_{i,\chi_{g'}}} f_{\widetilde{K}}(\bm y)=0$.
Thus
\[
\nabla f_{\widetilde{K}}(\bm{y})=0=\lambda\bm y,
\]
so $\lambda=0$ is an E-eigenvalue of $\mathcal A_{\widetilde K}$ and hence of $\mathcal A_K$.

Combining the above arguments, we obtain
\begin{align*}
  \operatorname{Spec}_E(\mathcal A_{K})
  =c\operatorname{Spec}_E(\mathcal A_{\widetilde K})
  =\{0\}\cup
  \bigcup_{s=1}^r
  \left\{
    \lambda\in\C :
    \lambda^2=c^2(k\zeta)^{-(k-2)}, \zeta\in\Sigma_s
  \right\}.
\end{align*}
\end{proof}

\begin{corollary}[Eigenvalues with real eigenvectors]\label{cor:z-spectrum}
Under the hypotheses of Theorem~\ref{thm:complex-e-spectrum}, the Z-eigenvalues of $\mathcal A_{K}$ are
\begin{equation}\label{eq:z-spectrum}
  \operatorname{Spec}_Z(\mathcal A_{K})
  =\{0\}\cup \bigcup_{s=1}^{r}
  \left\{
    \pm |S|q^{\frac{k-2}{2}}(sk)^{-\frac{k-2}{2}}
  \right\}.
\end{equation}
Equivalently,
\[
  \operatorname{Spec}_Z(\mathcal A_{K})
  =\{0\}\cup \bigcup_{s=1}^{r}
  \left\{
    \pm \frac{q^{\frac{k}{2}}}{r}(sk)^{-\frac{k-2}{2}}
  \right\}.
\]
\end{corollary}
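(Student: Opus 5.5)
By Lemma~\ref{thm:orthogonal-reduction}, which is valid for $\operatorname{Spec}_Z$ as well, $\operatorname{Spec}_Z(\mathcal A_K)=c\,\operatorname{Spec}_Z(\mathcal A_{\widetilde K})$. So it suffices to show
\[
\operatorname{Spec}_Z(\mathcal A_{\widetilde K})=\{0\}\cup\bigcup_{s=1}^r\{\pm (sk)^{-(k-2)/2}\}.
\]
The second displayed form then follows from $c=|S|q^{(k-2)/2}$ and $|S|=q/r$.

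I will rerun the analysis in the proof of Theorem~\ref{thm:complex-e-spectrum}, now with $\bm y$ and $\lambda$ real.

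\textbf{Zero.} The eigenvector used there for $\lambda=0$ is real, so $0\in\operatorname{Spec}_Z(\mathcal A_{\widetilde K})$.

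\textbf{Necessity.} Let $\lambda\neq0$ be a Z-eigenvalue. As before, the coordinates outside $S^\perp$ vanish. For each $\chi_g\in B$ the block $y_{1,\chi_g},\dots,y_{k,\chi_g}$ is entirely nonzero, and $y_{i,\chi_g}^2=:a_{\chi_g}>0$ does not depend on $i$. From $\lambda^2=y_{1,\chi_g}^{2(k-2)}$ we get $a_{\chi_g}^{k-2}=\lambda^2$. Since $a_{\chi_g}>0$ and the positive $(k-2)$th root is unique, $a_{\chi_g}=|\lambda|^{2/(k-2)}=:a$ is the same for every $\chi_g\in B$. In the notation of the theorem this means each $\omega_{\chi_g}=1$ once $\rho$ is taken positive. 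Normalization gives $1=\bm y^\top\bm y=ksa$ with $s=|B|$, so $a=1/(sk)$. Therefore $|\lambda|=a^{(k-2)/2}=(sk)^{-(k-2)/2}$, that is, $\lambda=\pm(sk)^{-(k-2)/2}$ with $1\le s\le r$.

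\textbf{Sufficiency.} Fix $s\in[1,r]$ and a sign. Choose $B\subseteq S^\perp$ with $|B|=s$, and set $t=(sk)^{-1/2}>0$, $\lambda=\pm t^{k-2}$. The $\pm$ is exactly $\lambda/t^{k-2}\in\{\pm1\}$. Pick real signs $\varepsilon_{i,\chi_g}\in\{\pm1\}$ whose product over $i$ equals $\lambda/t^{k-2}$; for a minus sign, take one of them equal to $-1$. Define $y_{i,\chi_g}=\varepsilon_{i,\chi_g}t$ on $B$ and $0$ elsewhere. The same computation as in the converse part of Theorem~\ref{thm:complex-e-spectrum} shows that $(\lambda,\bm y)$ is an eigenpair with $\bm y^\top\bm y=kst^2=1$, and here $\bm y$ is real. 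Both signs are attained this way.

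\textbf{Main obstacle.} The only delicate step is the uniqueness argument in the necessity part. Real squares are positive, which forces all the roots of unity $\omega_{\chi_g}$ to equal $1$. This is why the Z-spectrum collapses to the case $\zeta=s$ of the complex spectrum. Scaling by $c$ then finishes the proof.
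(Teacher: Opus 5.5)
Your proposal is correct and follows essentially the same route as the paper. Both arguments reduce to $\widetilde K$ via Lemma~\ref{thm:orthogonal-reduction}, use positivity of real squares and injectivity of $a\mapsto a^{k-2}$ on $(0,\infty)$ to force the common value $a=1/(sk)$, realize each $\pm(sk)^{-(k-2)/2}$ by sign vectors $\varepsilon_{i,\chi_g}$ scaled by $(sk)^{-1/2}$, take the real unit vector for $0$, and finally scale by $c=|S|q^{(k-2)/2}=q^{k/2}/r$.
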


\begin{proof}
By Lemma~\ref{thm:orthogonal-reduction}, it suffices to determine the Z-eigenvalues of $\mathcal A_{\widetilde K}$ and then multiply them by $c$.
Let $\lambda\ne0$ be a Z-eigenvalue of $\mathcal A_{\widetilde K}$ with real normalized eigenvector $\bm y$.
Put
\[
B=\{\chi_g\in S^\perp:y_{1,\chi_g}\ne0\}.
\]
The proof of Theorem~\ref{thm:complex-e-spectrum} shows that
$1\le |B|\le r$, that $y_{i,\chi_g}=0$ whenever
$i\in[k]$ and $\chi_g\notin B$, and that
\[
y_{i,\chi_g}^2=y_{1,\chi_g}^2>0,
\quad
(y_{1,\chi_g}^2)^{k-2}=\lambda^2
\]
for every $\chi_g\in B$ and $i\in[k]$.
Since the map $a\mapsto a^{k-2}$ is injective on the positive real numbers, these squares have a common value $a$. Therefore,
\[
1=\sum_{\chi_g\in B}\sum_{i=1}^k y_{i,\chi_g}^2=k|B|a,
\]
and hence
\[
\lambda^2=a^{k-2}=(k|B|)^{-(k-2)}.
\]
Conversely, fix an integer $s$ with $1\le s\le r$ and choose a subset
$B\subseteq S^\perp$ with $s$ elements. For either sign
$\delta\in\{\pm1\}$, choose $\varepsilon_{i,\chi_g}\in\{\pm1\}$ such that
$\prod_{i=1}^k\varepsilon_{i,\chi_g}=\delta$ for every $\chi_g\in B$.
Set $y_{i,\chi_g}=\varepsilon_{i,\chi_g}/\sqrt{sk}$ for
$\chi_g\in B$ and $i\in[k]$, and set all remaining coordinates to zero.
This gives a real normalized eigenvector with eigenvalue $\delta(sk)^{-(k-2)/2}$.
The zero eigenvalue is obtained by the real vector constructed in the proof of
Theorem~\ref{thm:complex-e-spectrum}. Multiplying by
$c=|S|q^{\frac{k-2}{2}}$, we obtain \eqref{eq:z-spectrum}.
\end{proof}

\pagebreak[3]
\begin{corollary}[Special cases]\label{cor:special-cases}
Retain the hypotheses and notation of Theorem~\ref{thm:complex-e-spectrum}.
\begin{enumerate}[label=\textup{(\arabic*)}]
  \item If $S=\{0\}$, then $r=q$ and the Z-eigenvalues of $\mathcal A_{K}$ are
  \[
   \operatorname{Spec}_Z(\mathcal A_K)
= \{0\}\cup \bigcup_{s=1}^{q}
    \left\{
      \pm q^{\frac{k-2}{2}}(sk)^{-\frac{k-2}{2}}
    \right\}.
  \]
  \item If $k=3$, then the complex $E$-eigenvalues and the $Z$-eigenvalues of $\mathcal A_{K}$ coincide, that is,
\[
\operatorname{Spec}_E(\mathcal A_K)
=
\operatorname{Spec}_Z(\mathcal A_K)
=
\{0\}
\cup
\bigcup_{s=1}^{r}
\left\{
\pm\frac{|S|\sqrt{q}}{\sqrt{3s}}
\right\}.
\]
  \item If $G=\Z_2^m$, then for every subgroup $S$ of $G$, the complex
  E-eigenvalues and Z-eigenvalues are given by
  \eqref{eq:complex-e-spectrum} and \eqref{eq:z-spectrum}, respectively,
  with $q=2^m$. In this case, using the standard character indexing
  $\chi_g(h)=(-1)^{g\cdot h}$, where $g\cdot h$ is computed in $\Z_2$,
  one may take
  \[
  R=U=\bigl(2^{-m/2}(-1)^{g\cdot h}\bigr)_{g,h\in\Z_2^m},
  \]
  the normalized Walsh--Hadamard matrix. Thus $Q=P$ in this special case.
\end{enumerate}
\end{corollary}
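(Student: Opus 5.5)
The plan is to treat the three parts as direct specializations of Theorem~\ref{thm:complex-e-spectrum} and Corollary~\ref{cor:z-spectrum}, checking in each case that the hypothesis $2G\subseteq S$ holds.

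\emph{Part (1).} If $S=\{0\}$, the hypothesis requires $2G=\{0\}$, so this is only meaningful when $G$ is itself elementary abelian $2$; that is implicit in ``retain the hypotheses''. Then $|S|=1$ and $r=[G:\{0\}]=q$. Substituting $|S|=1$ and $r=q$ into \eqref{eq:z-spectrum} gives the displayed set.

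\emph{Part (2).} Take $k=3$, so $k-2=1$.
\begin{itemize}
\item The condition $\omega_i^{k-2}=1$ forces every $\omega_i=1$. Hence $\Sigma_s=\{s\}$, since $s\neq 0$.
\item Then \eqref{eq:complex-e-spectrum} gives $\lambda^2=c^2/(3s)$ with $c=|S|\sqrt q$, so $\lambda=\pm|S|\sqrt q/\sqrt{3s}$, together with $0$.
\item Corollary~\ref{cor:z-spectrum} with $k=3$ gives $\pm|S|q^{1/2}(3s)^{-1/2}$, the same set.
\item Both descriptions also contain $0$, so they coincide.
\end{itemize}
The only care needed is that no new values arise from other roots of unity. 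This is immediate here, because the only first root of unity is $1$.

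\emph{Part (3).} Let $G=\Z_2^m$. Then $2G=\{0\}\subseteq S$ for every subgroup $S$, so Theorem~\ref{thm:complex-e-spectrum} and Corollary~\ref{cor:z-spectrum} apply with $q=2^m$.
\begin{itemize}
\item All characters $\chi_g(h)=(-1)^{g\cdot h}$ are real-valued, where $g\mapsto\chi_g$ is the standard isomorphism $G\cong\widehat G$.
\item Therefore the full normalized character matrix $U$ is real. By \eqref{eq:character-orthogonality} it is unitary, and a real unitary matrix is orthogonal.
\item Its rows indexed by $S^\perp$ are exactly the prescribed rows $u_{\chi_g}$. Its remaining rows give a valid orthonormal completion.
\end{itemize}
So one may choose $R=U$, the normalized Walsh--Hadamard matrix, and then $Q=\operatorname{diag}(R,\ldots,R)=P$. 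This is consistent with Lemma~\ref{thm:orthogonal-reduction}, since the construction allowed an arbitrary completion.

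No step is a real obstacle. The one point needing attention is the remark in part (1) that $S=\{0\}$ forces $G\cong\Z_2^m$ under the standing hypothesis.
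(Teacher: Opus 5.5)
Your proposal is correct and follows the paper's proof. Parts (1) and (2) are direct substitutions into Theorem~\ref{thm:complex-e-spectrum} and Corollary~\ref{cor:z-spectrum}; you additionally make explicit that $\Sigma_s=\{s\}$ when $k=3$, and that $S=\{0\}$ forces $G$ to be elementary abelian. Part (3) rests, as in the paper, on the fact that all characters of $\Z_2^m$ are real, so the full normalized Walsh--Hadamard matrix is an admissible orthonormal completion $R$.
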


\begin{proof}
Parts \textup{(1)} and \textup{(2)} follow directly from
Theorem~\ref{thm:complex-e-spectrum} and Corollary~\ref{cor:z-spectrum}.
For \textup{(3)}, we have $2G=\{0\}\subseteq S$, so the hypotheses hold
for every subgroup $S$. All characters of $G$ are real-valued, and their
normalized rows already form a real orthonormal basis. Hence the full
Walsh--Hadamard matrix may be used as $R$.
\end{proof}

\begin{example}\label{ex:direct-sum-2}
Consider $G=\Z_2^3$ and the subgroup
$S=\{(0,0,0),(0,0,1)\}$ of order $2$. Take $k=4$,
and write $K=4\mbox{-}\textrm{Cay}(G,S)$. Then
\[
q=|G|=8,\quad |S|=2,\quad |G/S|=\frac{|G|}{|S|}=4,
\]
and $c=|S|q^{\frac{k-2}{2}}=16$. The hypergraph
polynomial of \(K\) is
\[
f_K(\bm x)=\sum_{g_1+g_2+g_3+g_4\in S}
x_{1,g_1}x_{2,g_2}x_{3,g_3}x_{4,g_4}.
\]
Here $U$ is the $8\times8$ normalized Walsh--Hadamard matrix, and
$Q=\operatorname{diag}(U,U,U,U)\in\R^{32\times32}$ is the real
orthogonal matrix in Corollary~\ref{cor:special-cases}\textup{(3)}.
Since $\bm y=Q\bm x$, Lemma~\ref{thm:orthogonal-reduction} gives
\[
f_K(Q^\top\bm y)=16\sum_{{\chi_g}\in S^\perp}y_{1,{\chi_g}}y_{2,{\chi_g}}y_{3,{\chi_g}}y_{4,{\chi_g}}
\]
where $|S^\perp|=[G:S]=4$. Only the $16$ coordinates
$y_{i,\chi_g}$ with $i\in[4]$ and $\chi_g\in S^\perp$ occur in the
transformed polynomial. It is easy to check that zero is an
E-eigenvalue. We now determine the nonzero E-eigenvalues. Let
$B=\{{\chi_g}\in S^\perp:y_{1,{\chi_g}}\neq0\}$ and $s=|B|$. For nonzero E-eigenvalues,  by \eqref{eq:eigenvalue-lambda} the equations
give
\begin{equation}\label{eq:example-block}
16\prod_{j\neq i}y_{j,{\chi_g}}=\lambda y_{i,{\chi_g}},
\quad i\in[4].
\end{equation}
For each $\chi_g\in B$, the coordinates $y_{i,\chi_g}$, $i\in[4]$, are nonzero, and $y_{i,{\chi_g}}^2$ is independent of $i$. Denote its common value by $\rho_{\chi_g}$, so that $y_{i,{\chi_g}}^2=\rho_{\chi_g}$ for $i\in[4]$.
Multiplying \eqref{eq:example-block} by $y_{i,\chi_g}$ gives
$16P_{\chi_g}=\lambda\rho_{\chi_g}$, where $P_{\chi_g}=\prod_{j=1}^4y_{j,\chi_g}$.
Since $P_{\chi_g}^2=\rho_{\chi_g}^4$, we obtain
$\lambda^2=16^2\rho_{\chi_g}^2$.
Thus $\rho_{\chi_g}^2$ is independent of $\chi_g\in B$. Hence there exist $\rho\in\C\setminus\{0\}$ and $\omega_{\chi_g}\in\{\pm1\}$ such that
\begin{equation*}
  y_{i,{\chi_g}}^2=\rho\omega_{\chi_g},
\quad i\in[4],\ \chi_g\in B.
\end{equation*}
Then the
normalization condition gives
\[
1=\sum_{{\chi_g}\in B}\sum_{i=1}^4 y_{i,{\chi_g}}^2=4\rho\sum_{\chi_g\in B}\omega_{\chi_g}.
\]
Set $\zeta=\sum_{\chi_g\in B}\omega_{\chi_g}$. Then $\zeta\neq0$ and $\rho=\frac{1}{4\zeta}$.
Consequently,
\begin{equation*}
 \lambda^2
=16^2\rho^2
=\frac{16}{\zeta^2}.
\end{equation*}
As $s$ ranges from $1$ to $4$ and $\omega_{\chi_g}\in\{\pm1\}$, the possible nonzero sums are precisely
\[
\zeta=\pm j,\quad 1\le j\le 4.
\]
Therefore
\[
\lambda^2=\frac{16}{j^2},
\quad
\lambda=\pm\frac{4}{j},
\quad j \in[4].
\]
This agrees with Theorem \ref{thm:complex-e-spectrum}, which gives
$\lambda^2=c^2(k\zeta)^{-(k-2)}$ with $k-2=2$ and
$\zeta\in\Sigma_s$. Hence
\[
\operatorname{Spec}_E(\mathcal A_K)
=
\left\{
0,\ \pm 4,\ \pm 2,\ \pm\frac{4}{3},\ \pm 1
\right\}.
\]
\end{example}

\section*{Acknowledgements}

\begingroup
\setlength{\emergencystretch}{1em}
 Y.-H. Bao was partially supported by the National Natural Science Foundation of China (No.~12371015) and the Excellent University Research and Innovation Team in Anhui Province (No.~2024AH010002).
 Y.-Z. Fan was supported by the National Natural Science Foundation of China (No.~12331012).
\par
\endgroup

\section*{Conflict of Interest}
The authors declare no conflicts of interest.


\begin{thebibliography}{99}

\bibitem{Babai1979}
L. Babai.
Spectra of Cayley graphs.
\textit{Journal of Combinatorial Theory, Series B}, 27: 180--189, 1979.

\bibitem{Buratti1994}
M. Buratti.
Cayley, Marty and Schreier hypergraphs.
\textit{Abhandlungen aus dem Mathematischen Seminar der Universit\"at Hamburg}, 64: 151--162, 1994.

\bibitem{Biggs1993}
N. Biggs.
\textit{Algebraic Graph Theory}, 2nd ed.
Cambridge University Press, Cambridge, 1993.


\bibitem{CooperDutle2012}
J. Cooper and A. Dutle.
Spectra of uniform hypergraphs.
\textit{Linear Algebra and its Applications}, 436(9): 3268--3292, 2012.

\bibitem{FangHuangHuang2026}
L. Fang, H.-L. Huang and J. Huang.
Simultaneous block diagonalization of symmetric matrices via congruence.
\textit{Linear Algebra and its Applications}, 737: 193--212, 2026.

\bibitem{FangHuangLiao2025}
L. Fang, H.-L. Huang and L. Liao.
Simultaneous direct sum decompositions of several multivariate polynomials.
\textit{Linear Algebra and its Applications}, 724: 320--335, 2025.

\bibitem{GuanChuChu2018}
Y. Guan, M. T. Chu and D.-L. Chu,
SVD-based algorithms for the best rank-1 approximation of a symmetric tensor,
\emph{SIAM Journal on Matrix Analysis and Applications},
39(3): 1095--1115, 2018.

\bibitem{GodsilRoyle2001}
C. Godsil and G. Royle.
\textit{Algebraic Graph Theory}.
Springer, New York, 2001.

\bibitem{Harrison1975}
D. K. Harrison.
A Grothendieck ring of higher degree forms.
\textit{Journal of Algebra}, 35(1): 123--138, 1975.


\bibitem{HuangLvZhangYe2018}
H.-L. Huang, H. Lu, Y. Ye and C. Zhang.
On centres and direct sum decompositions of higher degree forms.
\textit{Linear and Multilinear Algebra}, 70(22): 7290--7306, 2022.

\bibitem{HuangLuYeZhang2021}
H.-L. Huang, H. Lu, Y. Ye and C. Zhang.
Diagonalizable higher degree forms and symmetric tensors.
\textit{Linear Algebra and its Applications}, 613: 151--169, 2021.

\bibitem{KofidisRegalia2002}
E. Kofidis and P. A. Regalia.
On the best rank-1 approximation of higher-order supersymmetric tensors.
\textit{SIAM Journal on Matrix Analysis and Applications}, 23(3): 863--884, 2002.

\bibitem{Luong2009}
B. Luong.
\textit{Fourier Analysis on Finite Abelian Groups}.
Birkh\"auser, Boston, 2009.

\bibitem{Lim2005}
L.-H. Lim.
Singular values and eigenvalues of tensors: a variational approach.
In \textit{Proceedings of the IEEE International Workshop on Computational Advances in Multi-Sensor Adaptive Processing}, pages 129--132, 2005.

\bibitem{LiQiYu2013}
G.-Y. Li, L.-Q. Qi and G.-H. Yu,
The Z-eigenvalues of a symmetric tensor and its application to spectral
hypergraph theory,
\emph{Numerical Linear Algebra with Applications},
20(6): 1001--1029, 2013.

\bibitem{LuWangXu2006}
Z.-P. Lu, C.-Q. Wang and M.-Y. Xu.
Semisymmetric cubic graphs constructed from Bi-Cayley graphs of \(A_n\).
\textit{Ars Combinatoria}, 80: 177--187, 2006.


\bibitem{NiQiWangWang2007}
G.-Y. Ni, L.-Q. Qi, F. Wang and Y.-J. Wang,
The degree of the E-characteristic polynomial of an even order tensor,
\emph{Journal of Mathematical Analysis and Applications},
329(2): 1218--1229, 2007.

\bibitem{PearsonZhang2014}
K. J. Pearson and T. Zhang.
On spectral hypergraph theory of the adjacency tensor.
\textit{Graphs and Combinatorics}, 30(5): 1233--1248, 2014.

\bibitem{Qi2005}
L. Qi.
Eigenvalues of a real supersymmetric tensor.
\textit{Journal of Symbolic Computation}, 40(6): 1302--1324, 2005.

\bibitem{Steinberg2012}
B. Steinberg.
\textit{Representation Theory of Finite Groups: An Introductory Approach}.
Springer, 2012.

\bibitem{Terras1999}
A. Terras.
\textit{Fourier Analysis on Finite Groups and Applications}.
Cambridge University Press, Cambridge, 1999.


\bibitem{XiongJiangLiuQin2022}
L. Xiong, Z.-F. Jiang, J.-Z. Liu and Q. Qin,
New Z-eigenvalue localization set for tensor and its application in
entanglement of multipartite quantum states,
\emph{Mathematics},
10(15): 2624, 2022.

\bibitem{ZouMeng2007}
H. Zou and J.-X. Meng.
Some algebraic properties of Bi-Cayley graphs.
\textit{Acta Mathematica Sinica, Chinese Series}, 50(5): 1075--1080, 2007.
\end{thebibliography}
\end{document}